\newtheorem{thm}{Theorem}[section]
\newtheorem{lem}[thm]{Lemma}
\newtheorem*{thm-i}{Theorem}
\newtheorem{cor}[thm]{Corollary}
\newtheorem{prop}[thm]{Proposition}
\theoremstyle{definition}
\newtheorem{rem}[thm]{Remark}
\newtheorem{defn}[thm]{Definition}
\newtheorem{ex}[thm]{Example}
\newtheorem*{acknowledgments*}{Acknowledgments}
\numberwithin{equation}{section}
\theoremstyle{remark}
\mathchardef\ordinarycolon\mathcode`\: 
\def\vcentcolon{\mathrel{\mathop\ordinarycolon}} 
\providecommand*\coloneqq{\mathrel{\vcentcolon\mkern-1.2mu}=}
\def\Z{{\mathbb Z}} 
\def\R{{\mathbb R}} 
\def\C{{\mathbb C}} 
\DeclareMathOperator\x{\otimes}
\DeclareMathOperator\omax{\x_{\max}}
\DeclareMathOperator\omin{\x_{\min}}
\DeclareMathOperator\Tor{Tor}
\def\KK{{K\!K}}
\def\cast{$C^{*}$}
\def\id{{\mathrm{id}}}
\def\Compact{\mathcal K}
\def\Kunneth{K\"{u}nneth }
\def\N{\mathcal N}
\def\Nmax{\N_{\max}}
\def\Nmin{\N_{\min}}
\begin{document}

\title{A note on the K\"{u}nneth theorem for nonnuclear \cast-algebras}
\author{Otgonbayar Uuye}
\date{\today}
\address{
School of Mathematics\\ 
Cardiff University\\
Senghennydd Road\\
Cardiff, Wales, UK.\\
CF24 4AG}
\email{UuyeO@cardiff.ac.uk}
\keywords{$K$-theory, \Kunneth theorem}

\begin{abstract} In this mostly expository note, we revisit the \Kunneth theorem in $K$-theory of nonnuclear \cast-algebras. We show that, using examples considered by Skandalis, there are algebras satisfying the \Kunneth theorem for the minimal tensor product but not for the maximal tensor product and vice versa.
\end{abstract}

\maketitle

\section{Introduction}

Let $A$ be a \cast-algebra. Suppose that $A$ is {\em nuclear}, that is, for any \cast-algebra $B$, the algebraic tensor product $A \odot B$ admits a unique \cast-norm. Let $A \x B$ denote the completion. Let $K_{*}$ denote the $\Z/2\Z$-graded topological $K$-theory.

The {\em \Kunneth theorem}, first studied by Atiyah in the abelian case \cite{MR0150780} and Schochet in the general (nuclear) case \cite{MR650021}, concerns the question of to what extent the natural $\Z/2\Z$-graded product map 
	\begin{equation}\label{eq alpha}
	\xymatrix@1{
	\alpha\colon K_{*}(A) \otimes K_{*}(B) \ar[r] & K_{*}(A \x B)}
 	\end{equation}
is an {\em isomorphism}. The following is the original statement of Schochet. See also \cite{MR1656031,MR2100669}.
\begin{thm}[{\cite{MR650021}}]\label{thm Kunneth}
Let $A$ and $B$ be \cast-algebras with $A$ in the smallest subcategory of the category of separable nuclear C*-algebras which contains the separable Type I algebras and is closed under the operations of taking ideals, quotients, extensions, inductive limits, stable isomorphism, and crossed products by $\Z$ and by $\R$. Then there is a natural $\Z/2\Z$-graded \Kunneth exact sequence
	\begin{equation}\label{eq Kunneth exact sequence}
	0 \to K_{*}(A) \otimes K_{*}(B) \overset{\alpha}{\to} K_{*}(A \x B) \to \Tor(K_{*}(A), K_{*}(B)) \to 0.
	\end{equation}
\end{thm}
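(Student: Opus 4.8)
The plan is to run a \emph{d\'evissage} inside the Kasparov category $\catKK$. I would first recall the standard identification of the class of algebras $A$ in the statement with $\N$, the localizing subcategory of $\catKK$ generated by $\C$ --- that is, the smallest full triangulated subcategory containing $\C$ and closed under countable coproducts. Indeed: by the Choi--Effros lifting theorem, every short exact sequence of separable nuclear \cast-algebras is semisplit and hence determines a distinguished triangle, so closure under ideals, quotients and extensions becomes closure under mapping cones; countable inductive limits are the associated homotopy colimits, built from coproducts and cones; stable isomorphism is invisible in $\catKK$; the Pimsner--Voiculescu sequence (through the Toeplitz extension) and the Connes--Thom isomorphism present $A\rtimes\Z$ and $A\rtimes\R$, up to $\catKK$-equivalence, as a mapping cone and a suspension built from $A$; and the separable Type I algebras are generated from $\C$ by these operations ($C_0(\R)\cong C^*(\R)$ is a crossed product of $\C$ by $\R$, hence so is each $C_0(\R^n)$, and a general separable Type I algebra has a composition series with subquotients of the form $C_0(Y)\x\Compact$). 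It therefore suffices to fix an arbitrary \cast-algebra $B$ and to show that
\[
\mathcal L_B \;:=\; \bigl\{\, A\in\catKK \;:\; \alpha\ \text{is injective with}\ \operatorname{coker}\alpha \cong \Tor(K_*(A),K_*(B))\,\bigr\}
\]
contains $\C$ and is a localizing subcategory; the canonical, natural connecting map of \eqref{eq Kunneth exact sequence} is produced afterwards.

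Since every object of $\N$ is nuclear, all \cast-tensor products with the algebras occurring in the argument are unambiguous, and $F_B := -\x B\colon \catKK\to\catKK$ is a triangulated functor preserving countable coproducts (it sends a semisplit short exact sequence to a semisplit short exact sequence, the completely positive splitting tensoring along), while $\alpha$ is a natural transformation out of $K_*(-)\otimes K_*(B)$. The elementary closures are then immediate: for $A=\C$ one has $K_*(\C)=\Z$ in degree $0$, so $\Tor(K_*(\C),K_*(B))=0$ and $\alpha$ is the identity of $K_*(B)$; replacing $A$ by the suspension $C_0(\R)\x A$ only shifts the $\Z/2\Z$-grading and leaves the $\otimes$- and $\Tor$-terms in place, so $\mathcal L_B$ is closed under suspension; and $K_*$, together with $\otimes$ and $\Tor$ over $\Z$, commutes with countable direct sums while filtered colimits of abelian groups are exact, so $\mathcal L_B$ is closed under countable coproducts. (Crossed products and inductive limits need no separate treatment, having been absorbed into the description of $\N$.)

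The heart of the matter is that $\mathcal L_B$ is closed under the formation of mapping cones --- equivalently, a two-out-of-three principle: if $A_1\to A_2\to A_3\to\Sigma A_1$ is a distinguished triangle with two of the $A_i$ in $\mathcal L_B$, then so is the third. Applying the triangulated functor $F_B$ gives a distinguished triangle $A_1\x B\to A_2\x B\to A_3\x B\to\Sigma(A_1\x B)$ and hence a $\Z/2\Z$-graded six-term exact sequence among the $K_*(A_i\x B)$. On the algebraic side one tensors the corresponding six-term exact sequence among the $K_*(A_i)$ with $K_*(B)$ \emph{in the derived sense}: because $\Z$ is a principal ideal domain, $K_*(B)$ has a two-term free resolution $0\to R_1\to R_0\to K_*(B)\to 0$, and applying $(-)\otimes R_0$ and $(-)\otimes R_1$ to the six-term sequence and totalising produces an exact sequence woven out of the groups $K_*(A_i)\otimes K_*(B)$ and $\Tor(K_*(A_i),K_*(B))$ --- the algebraic \Kunneth spectral sequence. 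Naturality of $\alpha$ and of the $K$-theory boundary maps assembles the $\alpha_{A_i}$ into a morphism between these two exact sequences, and a five-lemma-type diagram chase then deduces that $A_3\in\mathcal L_B$ from $A_1,A_2\in\mathcal L_B$. Getting this bookkeeping straight --- the degrees, the exact shape of the totalised sequence, the commutativity of every square --- is the step I expect to be the genuine obstacle; it is where the interplay of $\Tor$ with the $\Z/2\Z$-grading really enters, everything around it being formal. With this, $\mathcal L_B$ is localizing and contains $\C$, hence contains $\N$, and the \Kunneth sequence \eqref{eq Kunneth exact sequence} is exact for every $A\in\N$.

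It remains to pin down the connecting map and its naturality. Having shown $\mathcal L_B\supseteq\N$, observe that $\alpha$ is an isomorphism whenever the $K$-theory is free (the $\Tor$-term then vanishing). Given $A\in\N$, I would choose a $*$-homomorphism $g\colon P\to A\x\Compact$ from a countable direct sum $P$ of suspensions of $\C$ which is surjective on $K_*$; its mapping cone $C_g$ lies in $\N$, has free $K$-theory $\ker g_*$, and sits in the honest short exact sequence $0\to\Sigma(A\x\Compact)\to C_g\to P\to 0$. Tensoring with $B$, the resulting six-term sequence --- in which $\alpha_P$ and $\alpha_{C_g}$ are isomorphisms by the previous remark --- identifies $\operatorname{coker}\alpha$ with the kernel of $d_*\otimes\id_{K_*(B)}$, where $d\colon C_g\to P$ is the cone projection; since $0\to K_*(C_g)\xrightarrow{d_*}K_*(P)\xrightarrow{g_*}K_*(A)\to 0$ is a free resolution of $K_*(A)$, this kernel is precisely $\Tor(K_*(A),K_*(B))$. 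The composite is the connecting map of \eqref{eq Kunneth exact sequence}, and the comparison theorem for free resolutions shows that it is independent of the choice of $g$ and natural in $A$, and symmetrically in $B$. (The same ingredients, rearranged, also yield the Universal Coefficient Theorem.)
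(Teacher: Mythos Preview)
The paper does not prove Theorem~\ref{thm Kunneth} itself---it is quoted from Schochet~\cite{MR650021}---but the underlying technique is visible in the proof of Theorem~\ref{thm min-Kunneth}: one first shows by d\'evissage on $A$ that $\alpha$ is an isomorphism whenever $K_*(B)$ is \emph{free} (for this restricted statement the two-out-of-three step is an honest five-lemma, since tensoring an exact six-term sequence with a free abelian group keeps it exact), and only afterwards passes to arbitrary $B$ by a geometric resolution of $K_*(B)$.

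Your step~(5)---two-out-of-three for the \emph{full} K\"unneth sequence with the $\Tor$-term already in place---is where the argument breaks, and the difficulty is structural rather than bookkeeping. To run a five-lemma you must map an exact algebraic sequence to the topological six-term sequence, but $\alpha$ supplies only the $\otimes$-component of such a map: the derived tensor $K_*(A_i)\otimes^{\mathbf L}K_*(B)$, realised via a free resolution $R_1\hookrightarrow R_0\twoheadrightarrow K_*(B)$, maps to $K_*(A_i\otimes B)$ only through $H_0=K_*(A_i)\otimes K_*(B)$, and the $\Tor$-part has nowhere to go until the connecting homomorphism of~\eqref{eq Kunneth exact sequence} has already been constructed. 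Your ``totalised'' comparison therefore presupposes the map you build only in the final paragraph, and the argument is circular as ordered.

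The final paragraph inherits the same circularity from a second direction. You resolve $A$ rather than $B$, and then need $\alpha_{C_g}$ to be an isomorphism; you deduce this from $\mathcal L_B\supseteq\N$, i.e.\ from step~(5). Without that step, the freeness of $K_*(C_g)$ does \emph{not} by itself force $\alpha_{C_g}$ to be an isomorphism for arbitrary $B$: the elementary d\'evissage gives $\alpha$ iso for $A\in\N$ when $K_*(B)$ is free, not when $K_*(A)$ is free and $B$ is arbitrary. Both problems vanish if you reorder and swap the roles of $A$ and $B$ in the resolution: first run the easy d\'evissage (your steps (1)--(4), with the five-lemma now legitimate because $K_*(B)$ is free), then take a geometric resolution of $B$ as in the paper's proof of Theorem~\ref{thm min-Kunneth}. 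The six-term sequence for $A\otimes(-)$ applied to that resolution produces~\eqref{eq Kunneth exact sequence} and its natural connecting map in one stroke. With this reorganisation your argument is correct and is Schochet's.
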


\begin{rem}
\begin{enumerate}
\item It was shown in \cite{MR894590} that the \Kunneth exact sequence (\ref{eq Kunneth exact sequence}) always splits.
\item It is an open problem whether {\em all} separable {\em nuclear} \cast-algebras satisfy the \Kunneth exact sequence (\ref{eq Kunneth exact sequence}).
\end{enumerate}
\end{rem}  

For general \cast-algebras $A$ and $B$, the algebraic tensor product $A \odot B$ can be completed to a \cast-algebra in various ways. In this note, we consider the {\em maximal} tensor product $A \omax B$ and the {\em minimal} tensor product $A \omin B$ (see \cite{MR1873025, MR2391387}). We let $\pi = \pi_{A, B}$ denote the natural map	
	\begin{equation}\label{max to min}
	\xymatrix@1{\pi\colon A \omax B \ar@{->>}[r] & A \omin B}.
 	\end{equation}

In \cite{MR953916}, Skandalis constructed examples of algebras $A$ and $B$ such that the map $\pi_{A, B}$  is {\em not} isomorphic on $K$-theory (see Example~\ref{ex min-max}). Hence, for the \Kunneth theorem for general \cast-algebras, we need to distinguish the tensor products $\omax$ and $\omin$.  

 
We consider the \Kunneth theorem for $\omin$ in Section~\ref{sec min} and $\omax$ in Section~\ref{sec max}. Counterexamples are discussed in Section~\ref{sec cex}. We note that these counterexamples are not new and were considered in \cite{MR953916,MR1143449,MR2100669,MR2058474}. 

For the convenience of the reader, we start by recalling the mapping cone construction and the Puppe exact sequence in Section~\ref{sec Puppe}. We remark that we do not assume that our \cast-algebras are separable, since it is an unnatural and unnecessary restriction from our point of view. However, we do restrict, for simplicity, to separable algebras when we deal with $\KK$ or $E$-theory.

In Appendix~\ref{app skandalis}, we sketch Skandalis' examples.



\begin{acknowledgments*} The author is supported by an EPSRC fellowship. The author wishes to thank Takeshi Katsura for interesting discussions on the topic.
\end{acknowledgments*}

\section{Mapping Cones}\label{sec Puppe}

We recall the Puppe exact sequence in $K$-theory. All the material in this section are well-known. See \cite{MR658514,MR757510,MR1656031,MR2340673}.

Let
	\begin{equation}
	C_{0}[0, 1) \coloneqq \{f \colon [0, 1] \to \C \mid f(1) = 0\}
	\end{equation}
and let 
	\begin{equation}
	\mathrm{ev}_{0}\colon C_{0}[0, 1) \to \C, \quad f \mapsto f(0)
	\end{equation}
denote the evaluation map at $0 \in [0, 1)$.  
\begin{defn} Let $\phi\colon A \to B$ be a $*$-homomorphism. The {\em mapping cone} $C_{\phi}$ of $\phi$ is the pullback
	\begin{equation}
	\xymatrix{C_{\phi} \ar[r] \ar[d] & C_{0}[0, 1) \x B \ar[d]^{\mathrm{ev}_{0} \x \id_{B}}\\
	A \ar[r]^{\phi} & B}.
	\end{equation}
\end{defn}


\begin{thm}[Puppe Exact Sequence]\label{thm puppe}
Let $\phi\colon A \to B$ be a $*$-homomorphism. Then there is a {\em natural} $6$-term exact sequence
	\begin{equation}
	\xymatrix{
	K_{0}(C_{\phi}) \ar[r] & K_{0}(A) \ar[r]^-{\phi_{*}} & K_{0}(B) \ar[d]\\
	K_{1}(B) \ar[u]& \ar[l]_-{\phi_{*}}  K_{1}(A) & \ar[l] K_{1}(C_{\phi})\\
	}
	\end{equation}
\end{thm}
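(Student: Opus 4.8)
The plan is to realize the Puppe sequence as the six-term exact sequence in $K$-theory associated to a short exact sequence of \cast-algebras, after identifying the two connecting maps with $\phi_{*}$. First I would exhibit the relevant extension. Identifying $C_{0}[0,1) \x B$ with $C_{0}([0,1), B)$, the defining pullback exhibits $C_{\phi}$ as $\{(a, f) \in A \oplus C_{0}([0,1), B) \mid f(0) = \phi(a)\}$. Projection onto the first coordinate $\rho \colon C_{\phi} \to A$ is surjective: for $a \in A$, the function $t \mapsto (1-t)\phi(a)$ lies in $C_{0}([0,1), B)$ and takes the value $\phi(a)$ at $0$. Its kernel is the suspension $\mathrm{S}B \coloneqq C_{0}(0,1) \x B$, the kernel of $\mathrm{ev}_{0} \x \id_{B}$. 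This gives a short exact sequence
\[ 0 \longrightarrow \mathrm{S}B \longrightarrow C_{\phi} \overset{\rho}{\longrightarrow} A \longrightarrow 0, \]
which is natural in $\phi$ by the universal property of the pullback.

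Next I would feed this extension into the six-term exact sequence in $K$-theory and apply the suspension (Bott) isomorphism $K_{i}(\mathrm{S}B) \cong K_{i+1}(B)$. This produces an exact hexagon of exactly the claimed shape, with the two maps $K_{i}(A) \to K_{i}(B)$ equal to the connecting maps of the extension, transported along $K_{i+1}(\mathrm{S}B) \cong K_{i}(B)$. The one point with any content is to identify these connecting maps with $\phi_{*}$ (the sign is irrelevant for exactness). For this, observe that the extension above is the pullback along $\phi$ of the cone extension
\[ 0 \longrightarrow \mathrm{S}B \longrightarrow C_{0}[0,1) \x B \overset{\mathrm{ev}_{0} \x \id_{B}}{\longrightarrow} B \longrightarrow 0. \]
By naturality of the six-term exact sequence, the connecting map of the $C_{\phi}$-extension factors as $K_{i}(A) \overset{\phi_{*}}{\to} K_{i}(B) \overset{\delta_{i}}{\to} K_{i+1}(\mathrm{S}B)$, where $\delta_{i}$ is the connecting map of the cone extension. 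Since $C_{0}[0,1) \x B \cong C_{0}([0,1), B)$ is contractible, its $K$-groups vanish, so each $\delta_{i}$ is an isomorphism; comparing it with the suspension isomorphism used above shows $\delta_{i} = \pm\id$, and hence the connecting map equals $\pm\phi_{*}$ after the identification. Naturality of the whole hexagon then follows from naturality of the extension above, of the six-term exact sequence, and of the suspension isomorphism.

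The main obstacle is this last identification of the connecting maps: one must unwind the definition of the index and exponential maps of the $C_{\phi}$-extension through the pullback and match them with the suspension isomorphism. This is also where the sign ambiguities live; they do not affect exactness, but producing a canonical (rather than merely ``up to sign'') identification forces one to fix orientation conventions for the cone and the suspension. Everything else is formal once the six-term exact sequence for an arbitrary short exact sequence of \cast-algebras and Bott periodicity are granted.
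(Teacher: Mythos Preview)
Your argument is correct and is essentially the standard derivation of the Puppe sequence from the cone extension $0 \to \mathrm{S}B \to C_{\phi} \to A \to 0$; the identification of the boundary maps with $\phi_{*}$ via naturality under the pullback of the contractible cone extension is exactly the right mechanism, and the sign ambiguity you flag is genuine but harmless.

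The paper itself gives no argument: it simply cites Schochet and Cuntz--Meyer--Rosenberg. So there is nothing to compare at the level of proof strategy. One point worth being aware of, though, is the logical order. In several of the cited treatments the Puppe sequence is established \emph{first} (directly from homotopy invariance and the half-exactness of $K_{0}$ on split or cone extensions) and the six-term sequence for arbitrary extensions is then \emph{deduced} from it by comparing an ideal with the mapping cone of the quotient map. Your write-up runs this in the opposite direction, taking the six-term sequence for an arbitrary extension as input. That is perfectly legitimate provided you have an independent proof of the six-term sequence (as in Blackadar or R{\o}rdam--Larsen--Laustsen), but if you were working strictly within the framework of the cited references you would want to reorganise the argument to avoid circularity: prove exactness at $K_{*}(C_{\phi})$ and $K_{*}(A)$ directly from the split/cofibre structure of the cone, and obtain exactness at $K_{*}(B)$ by iterating the cone construction and using that $C_{\rho} \simeq \mathrm{S}B$.
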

\begin{proof} See \cite[Theorem 3.8 \& 4.1]{MR658514} or \cite[Theorem 2.38]{MR2340673}. 
%
\end{proof}

\begin{cor}\label{cor puppe} Let $\phi\colon A \to B$ be a $*$-homomorphism. Then $\phi$ induces an isomorphism $\phi_{*}\colon K_{*}(A) \cong K_{*}(B)$ if and only if $K_{*}(C_{\phi}) = 0$. \qed
\end{cor}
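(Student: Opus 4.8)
The plan is to derive Corollary~\ref{cor puppe} directly from the Puppe exact sequence of Theorem~\ref{thm puppe}, treating it as a purely formal consequence of exactness. First I would fix the map $\phi\colon A \to B$ and write down the six-term exact sequence attached to it, paying attention to where the maps induced by $\phi$ sit: the portions $K_{1}(C_{\phi}) \to K_{1}(A) \xrightarrow{\phi_{*}} K_{1}(B)$ and $K_{0}(C_{\phi}) \to K_{0}(A) \xrightarrow{\phi_{*}} K_{0}(B)$, together with the connecting maps $K_{1}(B) \to K_{0}(C_{\phi})$ and $K_{0}(B) \to K_{1}(C_{\phi})$, form a single exact hexagon.

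For the ``only if'' direction, assume $\phi_{*}\colon K_{*}(A) \to K_{*}(B)$ is an isomorphism. Exactness at $K_{0}(A)$ says that the image of the map $K_{0}(C_{\phi}) \to K_{0}(A)$ equals the kernel of $\phi_{*}$ in degree $0$, which is $0$ by injectivity; hence that map is zero. Exactness at $K_{0}(C_{\phi})$ then says $K_{0}(C_{\phi})$ equals the image of the connecting map $K_{1}(B) \to K_{0}(C_{\phi})$, whose kernel, by exactness at $K_{1}(B)$, is the image of $\phi_{*}$ in degree $1$; since $\phi_{*}$ is surjective in degree $1$, that connecting map is zero, so $K_{0}(C_{\phi}) = 0$. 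The same argument with the roles of the degrees swapped (using injectivity of $\phi_{*}$ in degree $1$ and surjectivity in degree $0$) gives $K_{1}(C_{\phi}) = 0$.

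Conversely, suppose $K_{*}(C_{\phi}) = 0$. Then both the outer groups $K_{0}(C_{\phi})$ and $K_{1}(C_{\phi})$ in the hexagon vanish, so the six-term sequence degenerates: exactness forces $\phi_{*}\colon K_{0}(A) \to K_{0}(B)$ to be injective (its kernel is the image of $K_{0}(C_{\phi}) = 0$) and surjective (the cokernel injects into $K_{1}(C_{\phi}) = 0$ via the connecting map, using exactness at $K_{0}(B)$ and at $K_{1}(C_{\phi})$), and symmetrically in degree $1$. Hence $\phi_{*}$ is an isomorphism in both degrees.

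There is no real obstacle here: the statement is a standard diagram chase and the only thing to be careful about is bookkeeping which map is which around the hexagon, and invoking naturality of the sequence (Theorem~\ref{thm puppe}) only insofar as it guarantees the relevant maps are the $\phi$-induced ones. One could alternatively phrase the whole argument by noting that a long exact sequence with every third term zero breaks into isomorphisms, but the direct chase is shortest to write.
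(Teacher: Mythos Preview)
Your argument is correct and is exactly the intended one: the paper records this corollary with a bare \qed, treating it as an immediate consequence of the Puppe six-term exact sequence, and your diagram chase simply spells out that immediacy. There is nothing to add.
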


The following properties of the mapping cone are folklores and follow immediately from Proposition~\ref{prop pullback}.

\begin{prop}\label{prop cone tensor algebra} Let $\phi\colon A \to B$ be a $*$-homomorphism and let $D$ be a \cast-algebra. Then we have natural isomorphisms
	\begin{align}
	C_{\phi} \omax D &\cong C_{\phi \omax \id_{D}},\\
	C_{\phi} \omin D &\cong C_{\phi \omin \id_{D}}.
	\end{align}
\qed
\end{prop}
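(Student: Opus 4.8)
The plan is to reduce everything to the behaviour of tensor products under pullbacks, which the text tells me is recorded in Proposition~\ref{prop pullback}. First I would recall that, by definition, $C_{\phi}$ sits in the pullback square with corners $C_{\phi}$, $A$, $C_{0}[0,1)\x B$ and $B$, with maps $\phi\colon A\to B$ and $\mathrm{ev}_{0}\x\id_{B}\colon C_{0}[0,1)\x B\to B$. Now tensor the whole square with $D$ over the minimal (resp.\ maximal) tensor product. Using that $\omin$ (resp.\ $\omax$) preserves the relevant pullback — this is exactly the content I am permitted to assume from Proposition~\ref{prop pullback}, the point being that $\mathrm{ev}_{0}\x\id_{B}$ is a surjection, so the pullback in question is a special case of a pullback along a surjective $*$-homomorphism, which both tensor products respect — one obtains that $C_{\phi}\x D$ is the pullback of the square whose corners are $A\x D$, $(C_{0}[0,1)\x B)\x D$ and $B\x D$.

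The next step is to identify this new pullback square with the defining pullback square of $C_{\phi\x\id_{D}}$. For the two bottom-right corners one uses associativity and commutativity of the tensor product together with the identification $(C_{0}[0,1)\x B)\x D\cong C_{0}[0,1)\x(B\x D)$, valid for both $\omin$ and $\omax$; under this identification the map $(\mathrm{ev}_{0}\x\id_{B})\x\id_{D}$ becomes $\mathrm{ev}_{0}\x\id_{B\x D}$. The left map $\phi\x\id_{D}\colon A\x D\to B\x D$ is manifestly the one appearing in the definition of the mapping cone of $\phi\x\id_{D}$. Hence the two pullback squares coincide, and by uniqueness of pullbacks one gets the asserted isomorphism $C_{\phi}\x D\cong C_{\phi\x\id_{D}}$ in each case. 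Naturality in $\phi$ and $D$ is immediate from the naturality of all the maps involved and the functoriality of the pullback.

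I expect the only genuine subtlety to be making sure that Proposition~\ref{prop pullback} really applies to the pullback defining the mapping cone: pullbacks of $C^{*}$-algebras need not commute with tensor products in general, and the statement one uses must be the one for pullbacks along a surjection (equivalently, for the pullback presentation of a mapping cone). Since $\mathrm{ev}_{0}\x\id_{B}$ is surjective, this is covered, and for $\omax$ exactness is not an issue because $\omax$ is exact on short exact sequences in full generality, while for $\omin$ one is saved by the fact that $C_{0}[0,1)$ (and more to the point the kernel $C_{0}(0,1)\x B$ of $\mathrm{ev}_{0}\x\id_{B}$) is nuclear in the relevant slot — or, more cleanly, that the relevant extension is semisplit. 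Everything else is routine bookkeeping with associativity and commutativity isomorphisms, so I would not spell those out. \qed
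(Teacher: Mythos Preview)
Your proposal is correct and follows the same route as the paper, which simply records that the proposition ``follows immediately from Proposition~\ref{prop pullback}'' (i.e.\ pullback $\Leftrightarrow$ exactness of the top row, then check that the two short exact sequences survive $\omax$ and $\omin$). Your identification of the one genuine point---that the cone extension $0\to \Sigma B\to C_{\phi}\to A\to 0$ is semisplit, so $\omin$ preserves it---is exactly what makes the ``immediate'' claim honest; the rest is, as you say, bookkeeping.
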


\begin{prop}\label{prop cone crossed prod} Let $G$ be a locally compact topological group and let $\phi\colon A \to B$ be a morphisms of $G$-\cast-algebras. Then $C_{\phi}$ is a $G$-\cast-algebra and
	\begin{align}
	C_{\phi} \rtimes G \cong C_{\phi \rtimes G}.
	\end{align}
	\qed
\end{prop}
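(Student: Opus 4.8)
The plan is to deduce the statement from Proposition~\ref{prop pullback}, in the same way that Proposition~\ref{prop cone tensor algebra} is deduced from it, applied now to the crossed-product functor $-\rtimes G$. The first thing to check is that $C_{\phi}$ is a $G$-\cast-algebra. Give $C_{0}[0,1)$ the trivial $G$-action; then $C_{0}[0,1) \x B$ is a $G$-\cast-algebra, the map $\mathrm{ev}_{0} \x \id_{B}$ is $G$-equivariant, and $\phi$ is $G$-equivariant by hypothesis, so the pullback square defining $C_{\phi}$ is a square of $G$-\cast-algebras. Concretely, $C_{\phi} = \{(a, f) \in A \oplus (C_{0}[0,1) \x B) : \phi(a) = (\mathrm{ev}_{0} \x \id_{B})(f)\}$ is a norm-closed $*$-subalgebra of $A \oplus (C_{0}[0,1) \x B)$ that is invariant under the (point-norm continuous) diagonal $G$-action, hence is itself a $G$-\cast-algebra, and both structure maps $C_{\phi} \to A$ and $C_{\phi} \to C_{0}[0,1) \x B$ are $G$-equivariant.

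Next I would apply $-\rtimes G$ to this square. The leg $\mathrm{ev}_{0} \x \id_{B}$ is an equivariant surjection, and $-\rtimes G$ sends equivariant surjections to surjections (the image is closed and contains the dense $*$-subalgebra $C_{c}(G, B)$). So, by Proposition~\ref{prop pullback}, $-\rtimes G$ carries the pullback square of $C_{\phi}$ to a pullback square, i.e.\ $C_{\phi} \rtimes G$ is the pullback of
\[
A \rtimes G \xrightarrow{\ \phi \rtimes G\ } B \rtimes G \xleftarrow{\ (\mathrm{ev}_{0} \x \id_{B}) \rtimes G\ } (C_{0}[0,1) \x B) \rtimes G.
\]
Since $G$ acts trivially on the first tensor factor, there is a natural isomorphism $(C_{0}[0,1) \x B) \rtimes G \cong C_{0}[0,1) \x (B \rtimes G)$, both being $C_{0}([0,1), B \rtimes G)$, under which $(\mathrm{ev}_{0} \x \id_{B}) \rtimes G$ becomes $\mathrm{ev}_{0} \x \id_{B \rtimes G}$. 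Substituting this in, the square computing $C_{\phi} \rtimes G$ is precisely the defining pullback square of $C_{\phi \rtimes G}$, which gives the desired isomorphism; naturality is inherited from that of the pullback and of $-\rtimes G$.

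The only step that is not purely formal --- and the one that Proposition~\ref{prop pullback} is meant to dispatch --- is that $-\rtimes G$ preserves pullbacks along surjections. This reduces to exactness of the crossed-product functor: from the short exact sequence $0 \to C_{0}(0,1) \x B \to C_{\phi} \to A \to 0$ one gets $0 \to (C_{0}(0,1) \x B) \rtimes G \to C_{\phi} \rtimes G \to A \rtimes G \to 0$, and a short diagram chase then identifies the middle term with the claimed pullback. For the full crossed product this exactness is standard; for the reduced crossed product one also needs $G$ to be exact, but this is harmless here, since in the intended applications (for instance $G = \Z$ or $G = \R$) the group is amenable and the two crossed products coincide. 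An analogous remark covers the $\omin$ case of Proposition~\ref{prop cone tensor algebra}, where $C_{0}[0,1)$ is nuclear and so $\omin$ and $\omax$ agree on the tensor factor being varied.
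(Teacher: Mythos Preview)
Your proposal is correct and follows exactly the route the paper indicates: the paper states that this proposition (like Proposition~\ref{prop cone tensor algebra}) is folklore and ``follow[s] immediately from Proposition~\ref{prop pullback},'' and you have simply written out the details of that deduction, using exactness of the full crossed product to transport the short exact sequence characterising the pullback. Your additional remarks about the reduced crossed product are not needed here (the paper's application in Example~\ref{ex max-min} uses the full crossed product), but they are correct and do no harm.
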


\begin{prop}\label{prop pullback} Consider a commutative diagram
	\begin{equation}
	\xymatrix{
	0 \ar[r] &I \ar[r] \ar@{=}[d] & X \ar[r] \ar[d] & A \ar[d] \ar[r] & 0\\
	0 \ar[r] &I \ar[r] &D \ar[r] & B  \ar[r]& 0
	}
	\end{equation}
Suppose that the lower row is exact. Then the right-hand square is a pullback diagram if and only if the upper row is exact.
\end{prop}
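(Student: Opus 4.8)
The plan is to compare $X$ with the concrete pullback and then reduce the whole statement to the five lemma. Label the right-hand square $X \xrightarrow{f} D$, $X \xrightarrow{p} A$, $D \xrightarrow{q} B$, $A \xrightarrow{g} B$ and write $\iota\colon I \to X$, $\iota'\colon I \to D$ for the left-hand maps, so that $f\circ\iota = \iota'$ by commutativity of the left square, while exactness of the lower row says that $\iota'$ is injective, $\operatorname{im}\iota' = \ker q$, and $q$ is surjective. First I would form the concrete pullback $P := \{(d,a) \in D \oplus A \mid q(d) = g(a)\}$, a \cast-algebra, with its coordinate projections $\pi_D$ and $\pi_A$; since the right square commutes there is a canonical $*$-homomorphism $\psi\colon X \to P$, $x \mapsto (f(x), p(x))$, with $\pi_D\circ\psi = f$ and $\pi_A\circ\psi = p$, and $X$ together with $f$ and $p$ is a pullback of $D \to B \leftarrow A$ if and only if $\psi$ is an isomorphism. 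So everything comes down to proving: $\psi$ is an isomorphism if and only if the upper row is exact.

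Next I would record the model short exact sequence
\[
0 \to I \xrightarrow{\ \kappa\ } P \xrightarrow{\ \pi_A\ } A \to 0, \qquad \kappa(i) := (\iota'(i), 0),
\]
which is just the standard fact that a pullback of a surjection is a surjection with isomorphic kernel: $\kappa$ is injective because $\iota'$ is; $\pi_A$ is surjective because $q$ is (lift $g(a)$ along $q$); and $\ker\pi_A = \{(d,0)\mid q(d) = 0\} = \ker q \oplus 0 = \operatorname{im}\kappa$ by exactness of the lower row. One has $\pi_A\circ\psi = p$ automatically, and, as soon as $p\circ\iota = 0$, also $\psi\circ\iota = \kappa$; thus $\psi$ fits into a morphism from the upper row to this model sequence that is the identity on $I$ and on $A$. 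Granting this, both implications are formal: if the upper row is exact then $\psi$ is an isomorphism by the five lemma; conversely, if $\psi$ is an isomorphism then the upper row is isomorphic to the model sequence and hence exact.

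The step I expect to need genuine care is the identity $p\circ\iota = 0$ --- that the upper row is a complex --- in the direction ``pullback $\Rightarrow$ exact'' (in the converse direction it is contained in the hypothesis that the row is exact). This is where it matters that $I$ is an \emph{ideal} of $X$, not merely a subalgebra: $\psi$ then carries the ideal $\iota(I)$ onto an ideal $J$ of $P$, and $\pi_D$ restricts to an injection on $J$ because $\pi_D\circ\psi\circ\iota = \iota'$ is injective. Since $\ker\pi_D = 0 \oplus \ker g$, for $(d,a) \in J$ one has $d \in \ker q$, hence $g(a) = 0$, hence $(0, a^*) \in P$, and then $(d,a)(0,a^*) = (0, a a^*) \in J \cap \ker\pi_D = 0$, so $a = 0$; thus $J \subseteq \ker\pi_A$ and $p\circ\iota = \pi_A\circ\psi\circ\iota = 0$. (If the diagram is read as already asserting that the upper row is a complex, this last step is vacuous and the proposition becomes a pure five-lemma argument.)
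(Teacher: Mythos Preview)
Your argument is correct. The paper does not give its own proof but defers to Pedersen \cite[Proposition 3.1]{MR1716199}; the standard argument is exactly your route: form the concrete pullback $P$, record the model short exact sequence $0 \to I \xrightarrow{\kappa} P \xrightarrow{\pi_A} A \to 0$, and compare with the upper row via the canonical $\psi\colon X \to P$ and the five lemma.

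One remark on the point you flag about $p\circ\iota = 0$ in the direction ``pullback $\Rightarrow$ exact''. Your ideal argument is correct as written, but the premise that $\iota(I)$ is an ideal of $X$ is not forced by the bare commutative diagram, and without some such hypothesis the implication can actually fail (for instance take $D=A=\C\oplus\C$, $B=\C$, $q=g=\mathrm{pr}_1$, $X=P$, and $\iota(d)=((0,d),(0,d))$; then $X$ is the pullback but $p\circ\iota\neq 0$). The intended reading here is your parenthetical one: both rows are complexes from the outset, so $p\circ\iota=0$ is part of the data and the proposition reduces to a clean five-lemma statement.
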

\begin{proof} See \cite[Proposition 3.1]{MR1716199}.
\end{proof}

\section{The Minimal Tensor Product}\label{sec min}

The following is the K\"{u}nneth theorem for the minimal tensor product. The equivalence (\ref{Kfree}) $\Leftrightarrow$ (\ref{Kfull}) is shown in \cite{MR2100669}. The condition (\ref{Kzero}) is an analogue of the condition (iii) of \cite[Proposition 5.3]{MR953916}.
\begin{thm}[K\"{u}nneth theorem for $\omin$]\label{thm min-Kunneth} Let $A$ be a \cast-algebra. Then the following conditions on $A$ are equivalent.
\begin{enumerate}
\item\label{Kzero} For any \cast-algebra $B$, if $K_{*}(B) = 0$ then $K_{*}(A \omin B) = 0$.
\item\label{Kfree} For any  \cast-algebra $B$, if $K_{*}(B)$ is free then the product map 	\[\alpha_{\min}\colon K_{*}(A) \otimes K_{*}(B) \to K_{*}(A \omin B)\]
is an isomorphism.
\item\label{Kfull} For any \cast-algebra $B$, there is a (natural) short exact sequence
 	\[
	0 \to K_{*}(A) \otimes K_{*}(B) \overset{\alpha_{\min}}{\to} K_{*}(A \omin B) \to \Tor(K_{*}(A), K_{*}(B)) \to 0.
	\]
\end{enumerate}
\end{thm}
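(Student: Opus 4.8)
The plan is to prove the three implications $(\ref{Kfull}) \Rightarrow (\ref{Kfree}) \Rightarrow (\ref{Kzero}) \Rightarrow (\ref{Kfull})$, the first two being essentially formal and the last being the substantive one. The implication $(\ref{Kfull}) \Rightarrow (\ref{Kfree})$ is immediate: if $K_*(B)$ is free then $\Tor(K_*(A), K_*(B)) = 0$, so the short exact sequence collapses to the statement that $\alpha_{\min}$ is an isomorphism. The implication $(\ref{Kfree}) \Rightarrow (\ref{Kzero})$ is also immediate: the zero group is free, so if $K_*(B) = 0$ then $\alpha_{\min}\colon K_*(A) \otimes K_*(B) \to K_*(A \omin B)$ is an isomorphism, and the left-hand side is $0$, whence $K_*(A \omin B) = 0$.

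For the main implication $(\ref{Kzero}) \Rightarrow (\ref{Kfull})$, I would follow the classical Schochet-style dimension-shifting argument, adapted to run entirely within the category of (not necessarily separable) \cast-algebras and the minimal tensor product. Fix a \cast-algebra $B$. First realize $K_*(B)$ as the cokernel of a map of free abelian groups: choose free abelian groups $F_0, F_1$ and a surjection $F \twoheadrightarrow K_*(B)$ with free kernel, and \emph{geometrically resolve} $B$ — that is, find \cast-algebras $F_B^0, F_B^1$ with free $K$-theory (direct sums of suspensions of $\C$ or of $\Compact$) together with a $*$-homomorphism $u\colon F_B^0 \to F_B^1$ and a map $F_B^1 \to B$ (up to the appropriate notion of homotopy/$KK$-equivalence) inducing the chosen free resolution on $K_*$; equivalently, build a \cast-algebra $\widetilde{B}$ with free $K$-theory mapping to $B$ and inducing a surjection on $K_*$ with free kernel. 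Concretely, the mapping cone $C_\psi$ of a suitable $\psi$ realizing $F_B^0 \to F_B^1$ is $KK$-equivalent to (a suspension of) $B$, by Corollary~\ref{cor puppe} and the long exact sequence. Now tensor the whole resolution with $A$ over $\omin$: by Proposition~\ref{prop cone tensor algebra} the mapping cone is preserved, $C_\psi \omin A \cong C_{\psi \omin \id_A}$, and the Puppe exact sequence of Theorem~\ref{thm puppe} for $\psi \omin \id_A$ gives a six-term exact sequence relating $K_*(F_B^0 \omin A)$, $K_*(F_B^1 \omin A)$, and $K_*(C_\psi \omin A)$. Since $F_B^i$ have free $K$-theory, $\alpha_{\min}$ is an isomorphism for $B = F_B^i$ (here one needs the elementary fact that the \Kunneth formula holds when one factor is a direct sum of suspensions of $\C$ and $\Compact$, which follows from continuity of $K$-theory and $\omin$, stability, and Bott periodicity). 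The six-term sequence then degenerates into the desired short exact sequence once one identifies $K_*(C_\psi \omin A)$ with $K_*(B \omin A)$ up to a dimension shift — and this identification is exactly where hypothesis (\ref{Kzero}) enters: one must know that tensoring an $A$ with property (\ref{Kzero}) against a $K$-theoretically trivial algebra (the "error term" comparing $C_\psi$ and $\Sigma B$) produces something $K$-theoretically trivial, so that the comparison map is an isomorphism. Chasing the resulting diagram, and using that $\ker(F \to K_*(B))$ is free so $\Tor$ is computed by this length-one resolution, yields the sequence in (\ref{Kfull}); naturality follows from the naturality of all the constructions involved (the Puppe sequence, the mapping cone, and $\alpha_{\min}$).

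The main obstacle, and the point deserving the most care, is the construction of the geometric free resolution of an \emph{arbitrary} (non-separable) \cast-algebra $B$ and the verification that hypothesis (\ref{Kzero}) applies to the correct error algebra. In the separable nuclear setting one has $KK$-theory and the bootstrap class to lean on; here one must either argue directly with mapping-cone telescopes and the Puppe sequence — building $\widetilde B$ as a (possibly infinite) direct sum of suspensions of $\C$ and $\Compact$ indexed by generators of $K_*(B)$, with the connecting maps encoded by a single $*$-homomorphism into another such direct sum, and then passing to the mapping cone — or invoke the equivalence $(\ref{Kfree}) \Leftrightarrow (\ref{Kfull})$ already established in \cite{MR2100669} and reduce to proving $(\ref{Kzero}) \Rightarrow (\ref{Kfree})$, which is the cleaner route: given $K_*(B)$ free, pick a \cast-algebra $F_B$ with $K_*(F_B)$ free and a $*$-homomorphism $F_B \to B$ inducing an isomorphism on $K_*$; its mapping cone $C$ has $K_*(C) = 0$ by Corollary~\ref{cor puppe}, so $K_*(C \omin A) = 0$ by (\ref{Kzero}), so by Proposition~\ref{prop cone tensor algebra} and the Puppe sequence for $\id_A \omin (F_B \to B)$ the map $K_*(A \omin F_B) \to K_*(A \omin B)$ is an isomorphism, and since it intertwines the two copies of $\alpha_{\min}$ and the \Kunneth formula holds for the free factor $F_B$, we conclude $\alpha_{\min}$ is an isomorphism for $B$. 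One still needs, for general $B$ with free $K$-theory, to produce such an $F_B$ and map — a direct sum of suspensions of $\C$ and $\Compact$ with a chosen element of each $K$-group as the class of a projection or unitary over it — and to check the naturality claim in (\ref{Kfull}), but these are routine given the machinery of Section~\ref{sec Puppe}.
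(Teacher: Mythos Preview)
Your proposal is correct and, in its ``cleaner route'', follows exactly the paper's approach: the implications $(\ref{Kfull})\Rightarrow(\ref{Kfree})\Rightarrow(\ref{Kzero})$ are trivial, $(\ref{Kfree})\Rightarrow(\ref{Kfull})$ is cited from Schochet's geometric-resolution argument, and $(\ref{Kzero})\Rightarrow(\ref{Kfree})$ is proved by choosing a free model $D\to(\text{a stabilization of }B)$ inducing an isomorphism on $K_*$, applying $(\ref{Kzero})$ to its mapping cone, and invoking Corollary~\ref{cor puppe} together with Proposition~\ref{prop cone tensor algebra}. The one technical point the paper makes explicit that you leave vague is that the target of the comparison map is taken to be $\Sigma^{2}\otimes B\otimes\Compact$ rather than $B$ itself (with $D=\bigoplus_{\Lambda_1}\Sigma^{2}\oplus\bigoplus_{\Lambda_2}\Sigma^{3}$), so that every class in $K_*(B)$ is genuinely hit by a $*$-homomorphism from a suspension of $\C$; Bott periodicity and stability then transport the conclusion back to $B$.
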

\begin{proof} The implications (\ref{Kfull}) $\Rightarrow$ (\ref{Kfree}) $\Rightarrow$ (\ref{Kzero}) are clear. The implication (\ref{Kfree}) $\Rightarrow$ (\ref{Kfull}) is due to Schochet and follows from the existence of a geometric resolution (cf. Proof of \cite[Theorem 4.1]{MR650021} or \cite[Theorem 3.3]{MR2100669}). 

For the implication (\ref{Kzero}) $\Rightarrow$ (\ref{Kfree}), let $B$ be a \cast-algebra with $K_{*}(B)$ free. Let $\Sigma^{n} \coloneqq C_{0}(\R^{n})$, $n \ge 0$. In the following, we abbriviate $\omin$ by $\x$. 

 Since $K_{*}(B)$ is free, there is an abelian \cast-algebra of the form $D = \oplus_{\Lambda_{1}} \Sigma^{2} \bigoplus \oplus_{\Lambda_{2}} \Sigma^{3}$ and a $*$-homomorphism $\varphi\colon D \to \Sigma^{2} \otimes B \otimes \Compact$ inducing isomorphism in $K$-theory, where $\Compact$ is the \cast-algebra of compact operators on a suitable Hilbert space. Let $C_{\varphi}$ denote the mapping cone of $\varphi$. Then $K_{*}(C_{\varphi}) = 0$ by Corollary~\ref{cor puppe}. Since $A \x C_{\varphi} \cong C_{\id_{A} \x \varphi}$, we see that $K_{*}(C_{\id_{A} \x \varphi}) = K_{*}(A \x C_{\varphi}) = 0$ by (\ref{Kzero}),  hence $\id_{A} \x \varphi$ induces an isomorphism in $K$-theory, again by Corollary~\ref{cor puppe}. The top map in the following commutative diagram is clearly an isomorphism, thus it follows that $\alpha_{\min}$ is an isomorphism for $(A, \Sigma^{2} \x B \x \Compact)$.
	\begin{equation}
	\xymatrix{
	K_{*}(A) \otimes K_{*}(D) \ar[r]^{\cong} \ar[d]^{\id_{K_{*}(A)} \x \varphi_{*}} & K_{*}(A \x D) \ar[d]^{(\id_{A} \x \varphi)_{*}}\\
	K_{*}(A) \otimes K_{*}(\Sigma^{2} \x B \x \Compact) \ar[r]^-{\alpha_{\min}} & K_{*}(A \x \Sigma^{2} \x B \x \Compact)\\
	}
	\end{equation}
Now Bott periodicity	completes the proof.
\end{proof}

\begin{rem}[Separable algebras]\label{rem sep min-Kunneth} Let $A$ be a \cast-algebra. The proof of Theorem~\ref{thm min-Kunneth} shows that the following conditions are equivalent\footnote{If $A$ is also separable, the proof can be shortened using $\KK$-theory.}. See also \cite[Theorem 3.3]{MR2100669}.
\begin{enumerate}
\item[(1')]\label{sepKzero} For any {\em separable} \cast-algebra $B$, if $K_{*}(B) = 0$ then $K_{*}(A \omin B) = 0$.
\item[(2')]\label{sepKfree} For any {\em separable} \cast-algebra $B$, if $K_{*}(B)$ is free then the product map
	\[\alpha_{\min}\colon K_{*}(A) \otimes K_{*}(B) \to K_{*}(A \omin B)\]
is an isomorphism. 
\item[(3')]\label{sepKfull} For any {\em separable} \cast-algebra $B$, there is a (natural) short exact sequence
 	\[
	0 \to K_{*}(A) \otimes K_{*}(B) \overset{\alpha_{\min}}{\to} K_{*}(A \omin B) \to \Tor(K_{*}(A), K_{*}(B)) \to 0.
	\]
\end{enumerate}
Moreover, it is easy to see that the a priori weaker condition (3') is equivalent to (\ref{Kfull}). Indeed, write $B$ as the inductive limit of its separable \cast-subalgebras under inclusions: $B \cong \lim_{B' \subseteq B} B'$, $B'$ separable. Then 
	\begin{equation}
	A \omin B \cong \lim_{B' \subseteq B} A \omin B'
	\end{equation}
and the implication (3') $\Rightarrow$ (\ref{Kfull}) follows from the continuity of $K$-theory and the fact that tensor products of abelian groups commute with direct limits. It follows that all six conditions are equivalent, hence we may restrict to $B$ separable in Theorem~\ref{thm min-Kunneth}.
\end{rem}

\begin{defn} 
Let $\Nmin$ denote the class of \cast-algebras $A$ satisfying the equivalent conditions of Theorem \ref{thm min-Kunneth} and Remark~\ref{rem sep min-Kunneth}. 
\end{defn}

Now we survey some results about $\Nmin$ and list some examples.

\begin{lem}[{\cite[Section 2]{MR650021}, \cite[23.4]{MR1656031} or \cite[Lemma 4.4]{MR2100669}}]\label{lem properties of Nmin} The class $\Nmin$ enjoys the following properties. 
\begin{enumerate}
\item If $A \in \Nmin$ and $B$ is Morita dominated by $A$, then $B \in \Nmin$. In particular, $\Nmin$ is stable under Morita equivalence.
\item\label{item min 23} In a {\em semi-split} short exact sequence of \cast-algebras, if two of the algebras are in $\Nmin$ , then so is the third.
\item If $A$, $B \in \Nmin$, then $A \omin B \in \Nmin$.
\item If $A = \lim A_{i}$, such that all structure maps are injective and all $A_{i} \in \Nmin$, then $A \in \Nmin$.
\end{enumerate}
\qed
\end{lem}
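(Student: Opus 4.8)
The plan is to check each of the four properties against the most convenient of the equivalent conditions in Theorem~\ref{thm min-Kunneth}, namely condition~(\ref{Kzero}): $A \in \Nmin$ precisely when $K_{*}(A \omin D) = 0$ for every \cast-algebra $D$ with $K_{*}(D) = 0$. I would carry this out using only a few elementary permanence properties of the minimal tensor product --- that $\omin$ is associative, commutes with inductive limits and preserves injectivity of connecting maps, and sends a semi-split short exact sequence, tensored by a fixed $D$, to an exact sequence (discussed below) --- together with the standard facts that $K$-theory is additive, stable (so $K_{*}((A \otimes \Compact) \omin D) \cong K_{*}(A \omin D)$), continuous, and turns a short exact sequence of \cast-algebras into a cyclic six-term exact sequence.

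Granting these, properties (3) and (4) are formal. For (3): fixing $D$ with $K_{*}(D) = 0$, the hypothesis $B \in \Nmin$ gives $K_{*}(B \omin D) = 0$, and then applying the hypothesis on $A$ to the coefficient algebra $B \omin D$, together with associativity, yields $K_{*}((A \omin B) \omin D) \cong K_{*}(A \omin (B \omin D)) = 0$. For (4): with $A = \varinjlim A_{i}$ one has $A \omin D = \varinjlim(A_{i} \omin D)$, so continuity of $K$-theory gives $K_{*}(A \omin D) = \varinjlim K_{*}(A_{i} \omin D) = 0$ whenever $K_{*}(D) = 0$. For (1): Morita domination means that (a suitable stabilization of) $B$ is a direct summand of (a suitable stabilization of) $A$, so, since both $\omin$ and $K_{*}$ respect finite direct sums and stabilization, $K_{*}(B \omin D)$ is a direct summand of $K_{*}((A \otimes \Compact) \omin D) \cong K_{*}(A \omin D) = 0$ and hence vanishes; for genuine Morita equivalence the two-sided conclusion follows from the stable isomorphism $A \otimes \Compact \cong B \otimes \Compact$.

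The main obstacle is property (2). Given a semi-split extension $0 \to I \to A \to A/I \to 0$ with two of the three terms in $\Nmin$ and a fixed $D$ with $K_{*}(D) = 0$, I would tensor by $D$ to get $0 \to I \omin D \to A \omin D \to (A/I) \omin D \to 0$, feed this into the six-term exact sequence in $K$-theory, and observe that two of its three pairs of $K$-groups vanish by hypothesis, whence exactness of the cyclic sequence forces the third pair to vanish as well; thus the third algebra lies in $\Nmin$. The delicate point --- and the reason the hypothesis must be \emph{semi-splitness} rather than mere exactness --- is that the tensored sequence really is exact: $\omin$ is not exact when the (here arbitrary) coefficient algebra $D$ fails to be exact, but a completely positive contractive section of the original extension, tensored with $\id_{D}$, splits the quotient map $A \omin D \to (A/I) \omin D$ and thereby guarantees exactness at the middle term; this is standard, see \cite{MR1873025, MR2391387}. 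Everything else is a formal consequence of condition~(\ref{Kzero}).
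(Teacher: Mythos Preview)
Your argument is correct and is exactly the sort of verification the cited references carry out; the paper itself offers no proof beyond the \qed and the three citations, so there is nothing to compare strategy against. Your choice to test everything against condition~(\ref{Kzero}) of Theorem~\ref{thm min-Kunneth} is the efficient one, and your handling of the delicate point in (2) --- that semi-splitness is precisely what forces $0 \to I \omin D \to A \omin D \to (A/I) \omin D \to 0$ to stay exact for arbitrary $D$ --- is the crux and is correctly identified. One small caution: your description of Morita domination as ``(a suitable stabilization of) $B$ is a direct summand of (a suitable stabilization of) $A$'' is a bit loose; depending on the source, the definition is phrased via full hereditary subalgebras or via a retract in a homotopy/KK sense, and you should check that whatever definition you adopt really does tensor with $\id_D$ to give a retract on $K_{*}(- \omin D)$. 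Once that is pinned down, the argument for (1) goes through exactly as you wrote.
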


The following result of Chabert-Echterhoff-Oyono-Oyono generalises the $\Z$ and $\R$ case considered by Schochet. 
\begin{thm}[{\cite[Corollary 0.2]{MR2100669}}] Let $G$ be a second countable locally compact topological group satisfying the Baum-Connes conjecture with coefficients (cf.\ \cite[Conjecture 9.6]{MR1292018}). Let $A$ be a separable $G$-algebra. If $A \rtimes K \in \Nmin$ for all compact subgroups $K \subseteq G$, then $A \rtimes_{\mathrm{red}} G \in \Nmin$. 
\end{thm}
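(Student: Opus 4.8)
The plan is to deduce this from the Green--Julg/Baum--Connes machinery combined with the permanence properties of $\Nmin$ recorded in Lemma~\ref{lem properties of Nmin}, together with the naturality of the min-tensor-product \Kunneth condition under inductive limits and semi-split extensions. Concretely, fix a separable $G$-algebra $A$ with $A \rtimes K \in \Nmin$ for every compact subgroup $K \subseteq G$. Since $G$ satisfies the Baum--Connes conjecture with coefficients, the topological $K$-theory $K^{\mathrm{top}}_{*}(G; A)$ computes $K_{*}(A \rtimes_{\mathrm{red}} G)$. The topological side is, by definition, an inductive limit over the $G$-compact subsets $Z$ of the universal proper $G$-space $\underline{E}G$ of the groups $\KK^{G}_{*}(C_{0}(Z), A)$, so it suffices to show that each such $\KK^{G}_{*}(C_{0}(Z), A)$ is ``built from'' the groups $K_{*}(A \rtimes K)$ in a way compatible with $\Nmin$; first I would reduce to this statement and then handle it.

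The key step is the reduction to compact subgroups via a Mayer--Vietoris/cell-by-cell induction on a $G$-CW structure of the $G$-compact set $Z$. Each $G$-cell has the form $G/K \times D^{n}$ for a compact subgroup $K$, and for such an orbit $\KK^{G}_{*}(C_{0}(G/K \times \R^{n}), A) \cong \KK^{K}_{*}(\Sigma^{n}, A) \cong K_{*+n}(A \rtimes K)$ by the induction isomorphism (Green's imprimitivity) and the Green--Julg theorem, since $K$ is compact. Thus the ``atoms'' all lie in $\Nmin$ by hypothesis. The Mayer--Vietoris sequences attaching cells are semi-split (the relevant extensions of $C_{0}$-algebras over a $G$-CW pair admit completely positive splittings because one can split over the base and use that the cells are contractible), so by part~\eqref{item min 23} of Lemma~\ref{lem properties of Nmin} each finite-dimensional skeleton of $Z$ gives an algebra in $\Nmin$; passing to the limit over skeleta and then over $Z$, and invoking stability of $\Nmin$ under injective inductive limits (part~(4) of the lemma) together with the continuity argument of Remark~\ref{rem sep min-Kunneth}, yields $A \rtimes_{\mathrm{red}} G \in \Nmin$. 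The only subtlety is that $\Nmin$ is a property of an algebra rather than of a $\KK$-class, so strictly one works with the mapping-cone/Puppe formulation: at each stage one exhibits an explicit $C^{*}$-algebra whose $K$-theory realises the relevant term and checks membership in $\Nmin$ using Corollary~\ref{cor puppe} and Proposition~\ref{prop cone tensor algebra}.

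The main obstacle I expect is making the ``topological side is assembled from crossed products by compact subgroups, compatibly with $\Nmin$'' step fully rigorous without separability or nuclearity hypotheses getting in the way: the Baum--Connes assembly map and the Green--Julg isomorphism are the standard tools, but one must be careful that the extensions arising from the skeletal filtration of $\underline{E}G$ are genuinely semi-split (so that part~\eqref{item min 23} applies) and that the relevant inductive limits have injective connecting maps (so that part~(4) applies). A clean way to finesse this, and the route I would actually take in the write-up, is to cite \cite[Corollary 0.2]{MR2100669} essentially verbatim and only sketch the reduction above, since the authors of that paper carry out precisely this argument; the proof here can therefore be short, pointing to \cite{MR2100669} for the assembly-map input and to Lemma~\ref{lem properties of Nmin} for the permanence properties, and noting that the hypothesis ``$A \rtimes K \in \Nmin$ for all compact $K$'' is exactly what feeds the induction.
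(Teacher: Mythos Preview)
Your approach is essentially the original Chabert--Echterhoff--Oyono-Oyono argument: filter the topological side of the assembly map by $G$-compact pieces of $\underline{E}G$, reduce cell by cell to compact isotropy via induction and Green--Julg, and then propagate membership in $\Nmin$ using the permanence properties of Lemma~\ref{lem properties of Nmin}. This is correct in outline, and you rightly flag the delicate point: $\Nmin$ is a property of algebras, not of $\KK$-classes, so one must exhibit honest $C^{*}$-algebras realising each stage and verify the semi-splitness and injective-limit hypotheses.

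The paper takes a noticeably shorter and genuinely different route. It invokes the Meyer--Nest reformulation of Baum--Connes with coefficients \cite[Theorem~9.3]{MR2193334}: for any $G$-algebra $D$, if $K_{*}(D \rtimes K) = 0$ for every compact $K \subseteq G$, then $K_{*}(D \rtimes_{\mathrm{red}} G) = 0$. One then verifies condition~(\ref{Kzero}) of Theorem~\ref{thm min-Kunneth} directly. Given $B$ with $K_{*}(B) = 0$, set $D = A \omin B$ (trivial action on $B$); then $(A \omin B) \rtimes K \cong (A \rtimes K) \omin B$ has vanishing $K$-theory by the hypothesis $A \rtimes K \in \Nmin$, so the reformulation gives $K_{*}((A \omin B) \rtimes_{\mathrm{red}} G) = 0$, and $(A \omin B) \rtimes_{\mathrm{red}} G \cong (A \rtimes_{\mathrm{red}} G) \omin B$ finishes. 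This sidesteps the entire cell-by-cell induction, the semi-splitness checks, and the ``property of algebras versus $\KK$-classes'' subtlety you identify: by testing the vanishing criterion rather than trying to assemble $A \rtimes_{\mathrm{red}} G$ from pieces in $\Nmin$, no intermediate algebras are ever needed. Your route is more self-contained given Lemma~\ref{lem properties of Nmin}; the paper's buys a two-line proof at the price of importing \cite{MR2193334}, and illustrates exactly why condition~(\ref{Kzero}) was added to Theorem~\ref{thm min-Kunneth}.
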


The following is essentially a repackaging of their proof.
\begin{proof} By \cite[Theorem 9.3]{MR2193334}, the Baum-Connes conjecture with coefficients can be restated as follows:

For any $G$-\cast-algebra $A$, if $K_{*}(A \rtimes K) = 0$ for all compact subgroups $K \subseteq G$, then $K_{*}(A \rtimes_{\mathrm{red}} G) = 0$.

Now the proof is easily completed by appealing to Theorem~\ref{thm min-Kunneth}.
\end{proof}



%



\begin{ex} 
\begin{enumerate}
\item Type I algebras are in $\Nmin$ (Schochet \cite[Theorem 2.13]{MR650021}).
\item Any separable \cast-algebra in the {\em bootstrap category} of \cast-algebras $\KK$-equivalent to an abelian \cast-algebra is in $\Nmin$. (Rosenberg-Shochet \cite{MR894590}). The groupoid \cast-algebra of an amenable groupoid (Tu \cite[Proposition 10.7]{MR1703305}) and the full and reduced group \cast-algebras $C^{*}(G)$ and $C^{*}_{\lambda}(G)$ of an almost-connected group (Chabert-Echterhoff-Oyono-Oyono \cite[Proposition 5.1]{MR2100669}) are in the bootstrap category, hence in $\Nmin$.
\item Let $G$ be a separable locally compact group such that the component group $G/G_{0}$ satisfies the Baum-Connes conjecture with coefficients. Then the reduced group algebra $C^{*}_{\lambda}(G)$ is in $\Nmin$. (Chabert-Echterhoff-Oyono-Oyono \cite[Corollary 0.3]{MR2100669}).
\end{enumerate}
\end{ex}

 However, as we see below, there are non-nuclear (in fact non-exact, see Remark~\ref{rem KK-exactness}) \cast-algebras that are not in $\Nmin$.

\begin{defn} We say that a \cast-algebra $A$ is {\em $K$-exact} if the functor $B \mapsto K_{0}(A \omin B)$ is half-exact.
\end{defn}

Clearly, exact \cast-algebras are $K$-exact. 

The following remark is due to Skandalis (c.f.\ \cite[Remark 4.3]{MR2100669}).
\begin{rem}\label{rem sep-K-exactness} Associated to an extension
	\begin{equation}
	0 \to I \to B \to D \to 0,
	\end{equation}
there is a {\em double-cone}\footnote{It is the mapping cone of the inclusion of $I$ into the mapping cone of the quotient map $B \to D$.} algebra $C$ such that the sequence
	\begin{equation}
	K_{*}(A \omin I) \to K_{*}(A \omin B) \to K_{*}(A \omin D)
	\end{equation}
is exact in the middle if and only if $K_{*}(A \omin C) = 0$ (see \cite[p. 335-336]{MR1911663}).

It follows that all \cast-algebras in $\Nmin$ are $K$-exact.

Moreover, the construction of a double-cone is functorial and commutes with inductive limits of extensions. Thus $A$ is $K$-exact if the functor $B \mapsto K_{0}(A \omin B)$ is half-exact on extensions of separable \cast-algebras.
\end{rem}

\begin{ex}\label{ex K-exactness} 
\begin{enumerate} 
\item\label{item CG} Let $\Gamma$ be an infinite countable discrete group with Khazdan property (T), Kirchberg property (F) and Akemann-Ostrand property (AO) (cf. \cite{MR2562137}). Then the full group \cast-algebra $C^{*}(\Gamma)$ is not $K$-exact, hence not in $\Nmin$. (Skandalis \cite{MR1143449}).
\item\label{item M} The product $\prod_{n \ge 1} M_{n}$ is not {\em $K$-exact}, hence not in $\Nmin$. (Ozawa \cite[Theorem A.1]{MR1964549}). 
\end{enumerate}
\end{ex}

\begin{rem}\label{rem KK-exactness} We note that if a separable  \cast-algebra $A$ is not $K$-exact, then it cannot be $\KK$-equivalent to an exact \cast-algebra. 
\end{rem}

\begin{defn} We say that a \cast-algebra $A$ is {\em $K$-continuous} if the functor $B \mapsto K_{0}(A \omin B)$ is continuous i.e.\ commutes with inductive limits.
\end{defn}

Clearly,  \cast-algebras in $\Nmin$ are $K$-continuous. The following is less trivial.
\begin{thm} All $K$-continuous algebras are $K$-exact.
\end{thm}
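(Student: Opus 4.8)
The plan is to work with the double-cone description of $K$-exactness from Remark~\ref{rem sep-K-exactness}. For a separable extension $\mathcal E\colon 0\to I\to B\to D\to 0$, write $C(\mathcal E)$ for the associated double-cone algebra, so that $K_{*}(A\omin I)\to K_{*}(A\omin B)\to K_{*}(A\omin D)$ is exact in the middle if and only if $K_{*}(A\omin C(\mathcal E))=0$. Since, again by that remark, it suffices to test half-exactness on extensions of separable \cast-algebras, the theorem is equivalent to the assertion that $K_{*}(A\omin C(\mathcal E))=0$ for every separable extension $\mathcal E$.

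First I would settle the \emph{semi-split} case, which requires no hypothesis on $A$. If $\mathcal E$ is cp-split, with completely positive contractive section $\sigma\colon D\to B$, then $\id_{A}\otimes\sigma$ extends to a completely positive contractive map $A\omin D\to A\omin B$; comparing the quotient norm on $(A\omin B)/(A\omin I)$ with the minimal norm via this map shows the natural surjection $(A\omin B)/(A\omin I)\to A\omin D$ is isometric, i.e.\ that $0\to A\omin I\to A\omin B\to A\omin D\to 0$ is exact. (This is the standard fact that $\omin$ preserves cp-split exact sequences; equivalently, $B\mapsto K_{0}(A\omin B)$ is homotopy invariant, stable and split-exact, hence half-exact on semi-split extensions.) The six-term $K$-theory sequence then yields middle-exactness, i.e.\ $K_{*}(A\omin C(\mathcal E))=0$ for every semi-split separable $\mathcal E$.

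For a general separable $\mathcal E$, the idea is to exhibit it as an inductive limit of semi-split extensions. Granting a directed system of semi-split separable extensions $\mathcal E_{n}$, with morphisms of extensions and $\varinjlim\mathcal E_{n}\cong\mathcal E$, the double-cone construction — being functorial and compatible with inductive limits of extensions (Remark~\ref{rem sep-K-exactness}) — gives $C(\mathcal E)\cong\varinjlim C(\mathcal E_{n})$, so that, since $A$ is $K$-continuous,
\[
K_{*}(A\omin C(\mathcal E))\cong K_{*}\!\bigl(\varinjlim(A\omin C(\mathcal E_{n}))\bigr)\cong\varinjlim K_{*}(A\omin C(\mathcal E_{n}))=0 ,
\]
the last step by the semi-split case. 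The hard part is precisely producing this resolution: writing an arbitrary separable extension as an inductive limit of semi-split ones. The approach I would take is to pick a surjection onto $D$ from a separable \cast-algebra with the lifting property — e.g.\ a full free-group \cast-algebra $C^{*}(F_{\infty})$, of which every separable \cast-algebra is a quotient — pull $\mathcal E$ back along it to obtain a semi-split extension $\widetilde{\mathcal E}$ (with the same ideal $I$) surjecting onto $\mathcal E$, and then present $\mathcal E$ as the inductive limit of the quotients of $\widetilde{\mathcal E}$ by an increasing sequence of finitely generated ideals of its middle term. The delicate point, which I expect to be the only real work, is arranging that each of these quotient extensions remains cp-split, which should follow by perturbing the completely positive section of $\widetilde{\mathcal E}$ with a quasicentral approximate unit; when $A$ is separable one could instead run the same continuity argument inside $\KK$- or $E$-theory, where the semi-split hypothesis is the natural one.
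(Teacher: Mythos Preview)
Your reduction to separable extensions and your treatment of the semi-split case are fine. The gap is exactly where you flag it: the claim that an arbitrary separable extension can be written as an inductive limit of cp-split extensions. Your proposed construction does not do this. If you pull $\mathcal E$ back along a surjection $F\twoheadrightarrow D$ with $F$ having the lifting property, you get a split extension $0\to I\to\tilde B\to F\to 0$, but to recover $\mathcal E$ you must quotient $\tilde B$ by the ideal generated by $\sigma(K)$, where $K=\ker(F\to D)$ and $\sigma$ is the splitting. The ideals generated by $\sigma(K_n)$ in $\tilde B$ will typically meet $I$ nontrivially (they contain $I\sigma(K_n)I$), so the intermediate quotients are not extensions with ideal $I$, and there is no reason the resulting extensions with quotient $F/K_n$ are cp-split: $F/K_n$ has no lifting property, and a quasicentral-approximate-unit perturbation of $\sigma$ gives you an \emph{asymptotic} section, not a cp section at each finite stage. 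What you are groping towards is precisely the Connes--Higson construction, and the honest endpoint of that line is the statement you tack on at the end: pass to $E$-theory.

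That is what the paper does, but packaged differently and without any separability hypothesis on $A$. The functor $F(B)=K_{0}(A\omin B)$ is homotopy invariant and stable for free, and by hypothesis continuous; the paper then invokes a theorem of Dadarlat to the effect that any such functor factors through the Connes--Higson asymptotic homotopy category. In that category the inclusion $I\hookrightarrow C_q$ of the ideal into the mapping cone of the quotient map is an isomorphism for \emph{every} separable extension, so $F(I)\cong F(C_q)$ and half-exactness drops out of the Puppe sequence. In other words, rather than resolving the extension by semi-split ones, the paper resolves the \emph{functor} through a category in which every extension already behaves as if it were semi-split. Your approach would need an honest inductive-limit resolution of extensions by cp-split ones, and I do not know such a statement; absent that, the argument is incomplete.
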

\begin{proof} Let $A$ be a $K$-continuous \cast-algebra and let $F(B) \coloneqq K_{0}(A \omin B)$. Then by \cite[Theorem 3.11]{MR1262931}, $F$ factors through the asymptotic homotopy category of Connes-Higson \cite{MR1065438}. In particular, for any extension of separable \cast-algebras, the inclusion of the kernel into the mapping cone of the quotient map induces an isomorphism on $F$. It follows that $F$ is half-exact on separable \cast-algebras. The general case follows from Remark~\ref{rem sep-K-exactness}.
\end{proof}

Consequently, Example~\ref{ex K-exactness} give examples of \cast-algebras which are {\em not} $K$-continuous.

\section{The Maximal Tensor Product}\label{sec max}


The maximal tensor product case is analogous, hence we shall be brief.
\begin{thm}[K\"{u}nneth theorem for $\omax$]\label{thm max-Kunneth} Let $A$ be a \cast-algebra. Then the following conditions on $A$ are equivalent.
\begin{enumerate}
\item\label{Kzero-max} For any \cast-algebra $B$, if $K_{*}(B) = 0$ then $K_{*}(A \omax B) = 0$.
\item\label{Kfree-max} For any  \cast-algebra $B$, if $K_{*}(B)$ is free then the product map 	\[\alpha_{\max}\colon K_{*}(A) \otimes K_{*}(B) \to K_{*}(A \omax B)\]
is an isomorphism.
\item\label{Kfull-max} For any \cast-algebra $B$, there is a (natural) short exact sequence
 	\[
	0 \to K_{*}(A) \otimes K_{*}(B) \overset{\alpha_{\max}}{\to} K_{*}(A \omax B) \to \Tor(K_{*}(A), K_{*}(B)) \to 0.
	\]
\qed	
\end{enumerate}
\end{thm}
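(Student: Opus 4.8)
The plan is to mimic, essentially verbatim, the proof of Theorem~\ref{thm min-Kunneth}, with $\omin$ replaced everywhere by $\omax$. The three implications (\ref{Kfull-max}) $\Rightarrow$ (\ref{Kfree-max}) $\Rightarrow$ (\ref{Kzero-max}) are formal and require no argument beyond specializing $B$ (to an algebra with free, resp.\ vanishing, $K$-theory). The implication (\ref{Kfree-max}) $\Rightarrow$ (\ref{Kfull-max}) is Schochet's geometric-resolution argument: since the category of \cast-algebras has enough "free" objects (countable direct sums of suspensions of $\C$), one resolves $B$ by a short exact sequence $0 \to F_1 \to F_0 \to B \to 0$ (up to stabilization/suspension) with $F_0, F_1$ having free $K$-theory, applies $A \omax (-)$, and chases the resulting long exact sequence together with the hypothesis that $\alpha_{\max}$ is an isomorphism for the free pieces; the $\Tor$ term appears as the kernel of $K_*(A) \otimes K_*(F_1) \to K_*(A) \otimes K_*(F_0)$. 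The only point to check here is that $A \omax (-)$ is exact on this particular resolution — but the quotient map $F_0 \to B$ splits completely positively (indeed the resolution can be taken to be semi-split, as the $F_i$ are abelian), so maximal tensoring preserves its exactness, and the long exact sequence in $K$-theory is available.

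The substantive implication is (\ref{Kzero-max}) $\Rightarrow$ (\ref{Kfree-max}), which I would prove by copying the displayed argument in Theorem~\ref{thm min-Kunneth} line for line. Given $B$ with $K_*(B)$ free, choose an abelian \cast-algebra $D = \bigoplus_{\Lambda_1} \Sigma^2 \oplus \bigoplus_{\Lambda_2} \Sigma^3$ and a $*$-homomorphism $\varphi\colon D \to \Sigma^2 \otimes B \otimes \Compact$ inducing an isomorphism on $K$-theory (this uses nothing about tensor products, only that $K_*(B)$ is free, so the same $D$ and $\varphi$ as in the minimal case work). Then $K_*(C_\varphi) = 0$ by Corollary~\ref{cor puppe}. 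By Proposition~\ref{prop cone tensor algebra} we have $A \omax C_\varphi \cong C_{\id_A \omax \varphi}$, so hypothesis (\ref{Kzero-max}) gives $K_*(C_{\id_A \omax \varphi}) = 0$, whence $\id_A \omax \varphi$ is a $K$-theory isomorphism, again by Corollary~\ref{cor puppe}. The commutative square
	\begin{equation}
	\xymatrix{
	K_{*}(A) \otimes K_{*}(D) \ar[r]^{\cong} \ar[d]^{\id \otimes \varphi_{*}} & K_{*}(A \omax D) \ar[d]^{(\id_{A} \omax \varphi)_{*}}\\
	K_{*}(A) \otimes K_{*}(\Sigma^{2} \otimes B \otimes \Compact) \ar[r]^-{\alpha_{\max}} & K_{*}(A \omax \Sigma^{2} \otimes B \otimes \Compact)
	}
	\end{equation}
then has three isomorphisms (top: $D$ abelian with free $K$-theory, so $\alpha$ is classically an isomorphism there; left: $\varphi_*$ an iso; right: just shown), forcing $\alpha_{\max}$ to be an isomorphism for the pair $(A, \Sigma^2 \otimes B \otimes \Compact)$. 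Finally, Bott periodicity ($K_*(\Sigma^2 \otimes B \otimes \Compact) \cong K_*(B)$ naturally, and $A \omax \Sigma^2 \otimes B \otimes \Compact \cong \Sigma^2 \otimes (A \omax B) \otimes \Compact$, using that suspension and stabilization commute with $\omax$) reduces the pair $(A, B)$ to the pair just handled.

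**The main thing to verify** — and the only place the argument could conceivably differ from the minimal case — is that all the structural inputs used above hold for $\omax$: namely the mapping-cone compatibility $C_\phi \omax D \cong C_{\phi \omax \id_D}$ (this is Proposition~\ref{prop cone tensor algebra}, already stated for both tensor products), the fact that $\omax$ commutes with suspensions and with $\otimes \Compact$ (straightforward, since $C_0(\R^n)$ and $\Compact$ are nuclear, so these tensor products are unambiguous and the maximal tensor product is associative and respects them), and the existence of the $K$-theory long exact sequence for the semi-split resolution of $B$. None of these is problematic, so in fact the proof of Theorem~\ref{thm max-Kunneth} is word-for-word that of Theorem~\ref{thm min-Kunneth} with $\omin \rightsquigarrow \omax$, and the honest statement is simply: \emph{repeat the proof of Theorem~\ref{thm min-Kunneth}, replacing $\omin$ by $\omax$ throughout; every step goes through because Propositions~\ref{prop pullback} and~\ref{prop cone tensor algebra}, Corollary~\ref{cor puppe}, and Bott periodicity are all valid for the maximal tensor product.}
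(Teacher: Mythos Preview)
Your proposal is correct and is precisely the approach the paper takes: the paper simply marks the theorem with a \qed\ after remarking that ``the maximal tensor product case is analogous,'' i.e., it expects the reader to rerun the proof of Theorem~\ref{thm min-Kunneth} with $\omin$ replaced by $\omax$, exactly as you spell out. One minor simplification you could note: for the implication (\ref{Kfree-max}) $\Rightarrow$ (\ref{Kfull-max}) you don't even need the resolution to be semi-split, since $A \omax (-)$ preserves \emph{all} short exact sequences (as the paper itself remarks after Lemma~\ref{lem properties of Nmax}).
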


Needless to say, for {\em nuclear} algebras, the \Kunneth theorems \ref{thm min-Kunneth} and \ref{thm max-Kunneth} are equivalent.

\begin{rem}[Separable algebras]\label{rem sep max-Kunneth} Let $A$ be a \cast-algebra. The following conditions are equivalent to the (equivalent) conditions in Theorem~\ref{thm max-Kunneth}.
\begin{enumerate}
\item[(1')]\label{sepKzero-max} For any {\em separable} \cast-algebra $B$, if $K_{*}(B) = 0$ then $K_{*}(A \omax B) = 0$.
\item[(2')]\label{sepKfree-max} For any {\em separable} \cast-algebra $B$, if $K_{*}(B)$ is free then the product map
	\[\alpha_{\max}\colon K_{*}(A) \otimes K_{*}(B) \to K_{*}(A \omax B)\]
is an isomorphism. 
\item[(3')]\label{sepKfull-max} For any {\em separable} \cast-algebra $B$, there is a (natural) short exact sequence
 	\[
	0 \to K_{*}(A) \otimes K_{*}(B) \overset{\alpha_{\max}}{\to} K_{*}(A \omax B) \to \Tor(K_{*}(A), K_{*}(B)) \to 0.
	\]
\end{enumerate}
\qed
\end{rem}

\begin{defn} 
Let $\Nmax$ denote the class of \cast-algebras $A$ satisfying the equivalent conditions of Theorem \ref{thm max-Kunneth} and Remark~\ref{rem sep max-Kunneth}.
\end{defn}

\begin{lem}\label{lem properties of Nmax} The class $\Nmax$ enjoys the following properties. 
\begin{enumerate}
\item If $A \in \Nmax$ and $B$ is Morita dominated by $A$, then $B \in \Nmax$. In particular, $\Nmax$ is stable under Morita equivalence.
\item\label{item max 23} In a short exact sequence of \cast-algebras, if two of the algebras are in $\Nmax$ , then so is the third.
\item If $A$, $B \in \Nmax$, then $A \omax B \in \Nmax$.
\item If $A = \lim A_{i}$ and all $A_{i} \in \Nmax$, then $A \in \Nmax$.
\end{enumerate}
\qed
\end{lem}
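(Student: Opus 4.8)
The plan is to run every part through condition~(\ref{Kzero-max}) of Theorem~\ref{thm max-Kunneth}: a \cast-algebra $X$ lies in $\Nmax$ if and only if $K_{*}(E) = 0$ implies $K_{*}(X \omax E) = 0$. So in each case I fix an auxiliary algebra $E$ with $K_{*}(E) = 0$ and argue that the relevant maximal tensor product is $K$-acyclic. The two structural facts that make this cleaner than the minimal case of Lemma~\ref{lem properties of Nmin} --- and that let us drop the hypothesis ``semi-split'' in~(2) and ``injective structure maps'' in~(4) --- are that the maximal tensor product is \emph{exact} (a short exact sequence $0 \to I \to B \to D \to 0$ stays exact after $- \omax E$; see \cite{MR1873025}) and that $- \omax E$ commutes with \emph{arbitrary} inductive limits.

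Parts~(3) and~(4) are then immediate. If $A, B \in \Nmax$ and $K_{*}(E) = 0$, associativity of $\omax$ gives $(A \omax B) \omax E \cong A \omax (B \omax E)$; the right-hand side is $K$-acyclic because $B \in \Nmax$ makes $B \omax E$ $K$-acyclic and $A \in \Nmax$ then kills $A \omax (B \omax E)$. If $A = \lim A_{i}$ with all $A_{i} \in \Nmax$ and $K_{*}(E) = 0$, then $A \omax E \cong \lim (A_{i} \omax E)$ with each $A_{i} \omax E$ $K$-acyclic, so continuity of $K$-theory gives $K_{*}(A \omax E) = 0$.

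For~(2), given $0 \to I \to B \to D \to 0$ with two of the three algebras in $\Nmax$ and $K_{*}(E) = 0$, apply $- \omax E$ to obtain the short exact sequence $0 \to I \omax E \to B \omax E \to D \omax E \to 0$ (this is where exactness of $\omax$ enters), and feed it into the six-term exact sequence in $K$-theory, which holds for \emph{every} short exact sequence of \cast-algebras. A one-line diagram chase then shows that $K$-acyclicity of any two of the three terms forces it for the third. For~(1), the key points are that Morita domination $B \prec A$ survives $- \omax E$ (so $B \omax E \prec A \omax E$) and that it exhibits $K_{*}(B)$ as a natural direct summand of $K_{*}(A)$; hence if $A \in \Nmax$ and $K_{*}(E) = 0$ then $K_{*}(B \omax E)$ is a direct summand of $K_{*}(A \omax E) = 0$ and so vanishes. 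The Morita-equivalence assertion is the special case in which the summand is all of $K_{*}(A)$, handled exactly as in Lemma~\ref{lem properties of Nmin}.

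Beyond formal bookkeeping, the only real inputs are exactness of $\omax$ and its compatibility with inductive limits, both standard. I expect the one place needing genuine care to be~(1): making precise how Morita domination interacts with the maximal tensor product, since that step is the least purely formal of the four.
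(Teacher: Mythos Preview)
The paper gives no proof here (just \qed), treating the lemma as the evident analogue of Lemma~\ref{lem properties of Nmin}. Your argument via condition~(\ref{Kzero-max}) of Theorem~\ref{thm max-Kunneth} is correct and is exactly what the paper has in mind: note the remark immediately following the lemma that for any $A$ the functor $B \mapsto K_{0}(A \omax B)$ is half-exact and continuous, which is precisely the structural input you isolate to drop ``semi-split'' in~(2) and ``injective structure maps'' in~(4).
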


We remark that for any \cast-algebra $A$, the functor $B \mapsto K_{0}(A \omax B)$ is half-exact and continuous. Hence we cannot use the same techniques as in Section~\ref{sec min} to construct counterexamples to the \Kunneth theorem for $\omax$. However, see Example~\ref{ex min-max}.

\section{Counterexamples}\label{sec cex}

The counterexamples exploit the difference between Lemma~\ref{lem properties of Nmin}(\ref{item min 23}) and Lemma~\ref{lem properties of Nmax}(\ref{item max 23}). 
\begin{ex}[{$\Nmax \backslash \Nmin \ne \emptyset$; c.f.\ \cite{MR1143449},\cite[Theorem 5.4]{MR2058474}}]\label{ex max-min} Let $C$ be the double-cone of an extension 
	\begin{equation}
	0 \to I \to B \to D \to 0
	\end{equation}
of \cast-algebras (see Remark~\ref{rem sep-K-exactness}). Then for any \cast-algebra $A$, the tensor product $C \omax A$ is the double-cone of the extension 
	\begin{equation}
	0 \to I \omax A \to B \omax A \to D \omax A\to 0.
	\end{equation}
It follows that $K_{*}(C \omax A) = 0$ for all $A$ and $C$ belongs to $\Nmax$.
	
Let $A$ be a non-$K$-exact algebra and let $C$ be the double-cone of an extension for which $K_{*}(A \omin C) \ne 0$. Then $C$ does {\em not} belong to $\Nmin$. Hence $\Nmax \backslash \Nmin \ne \emptyset$.

Here is a concrete example: Let $\Gamma = \mathrm{SL}_{3}(\Z)$ and let 
	\begin{equation}\label{eq Gamma seq}
	0 \to J \to A \to B \to 0
	\end{equation} 
denote the extension of separable commutative $\Gamma$-\cast-algebras of  \cite[Theorem A.1]{MR1964549}. Let $C$ denote the double-cone of (\ref{eq Gamma seq}). Then $C$ is a separable commutative $\Gamma$-\cast-algebra and the full crossed product $C \rtimes \Gamma$ is the double cone of the extension
	\begin{equation}
	0 \to J \rtimes \Gamma \to A \rtimes \Gamma \to B \rtimes \Gamma \to 0
	\end{equation}
(See Proposition \ref{prop cone crossed prod}). Hence $C \rtimes \Gamma \in \Nmax \backslash \Nmin$.  
\end{ex}

The following observation is due to Skandalis \cite{MR953916}.
\begin{lem}\label{lem pi} Let $A$ be a \cast-algebra. Suppose that there is a \cast-algebra $B$ such that the natural map
	\begin{equation}
	\pi\colon A \omax B \to A \omin B
	\end{equation}
does {\em not} induce isomorphism in $K$-theory. Then the following statements are true.
\begin{enumerate}
\item The algebra $A$ fails one of the \Kunneth theorems (\ref{thm min-Kunneth} or \ref{thm max-Kunneth}).
\item If $A$ is separable, then $A$ is not $\KK$-equivalent to a nuclear algebra.
\item If $A$ is separable and exact, then $A$ is not $E$-equivalent to a nuclear algebra.
\end{enumerate}	
\end{lem}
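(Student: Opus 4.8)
The plan is to prove all three statements by showing that if $A$ were ``good'' in the appropriate sense, then $\pi_{A,B}$ would induce an isomorphism in $K$-theory for \emph{every} $B$, contrary to the hypothesis. Part (1) is handled by a five-lemma argument, and parts (2)--(3) by Skandalis' notion of $K$-nuclearity.

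\emph{Part (1).} I argue by contraposition: suppose $A \in \Nmin \cap \Nmax$. Since $\omax$, $\omin$ and $K_{*}$ all commute with inductive limits, writing $B$ as the inductive limit of its separable \cast-subalgebras (cf.\ Remark~\ref{rem sep min-Kunneth}) reduces us to the case $B$ separable. Choose, as in Schochet's proof of the Künneth theorem \cite{MR650021} (see also \cite[\S 23]{MR1656031}), a geometric resolution of $B$: a semi-split extension of the form
	\[ 0 \to C_{0}(\R) \x B \to C \to P \to 0 \]
with $K_{*}(P)$ and $K_{*}(C)$ free --- here $P$ has free $K$-theory and maps onto a free cover of $K_{*}(B)$, and $C$ is its mapping cone, whose $K$-theory is therefore a subgroup of the free group $K_{*}(P)$ by Theorem~\ref{thm puppe}. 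Applying the functors $-\omax A$ and $-\omin A$, which preserve semi-split extensions and, by Proposition~\ref{prop cone tensor algebra}, the mapping-cone construction, and passing to the associated $6$-term $K$-theory sequences, one obtains a commutative ladder of two $6$-term exact sequences with vertical maps induced by $\pi$. Because $P$ and $C$ have free $K$-theory and $A \in \Nmin \cap \Nmax$, the maps $\pi_{A,P}$ and $\pi_{A,C}$ are $K$-isomorphisms: $\pi$ is the identity on the algebraic tensor product, whence $\pi_{*} \circ \alpha_{\max} = \alpha_{\min}$, and $\alpha_{\min}$, $\alpha_{\max}$ are isomorphisms by the free case of Theorems~\ref{thm min-Kunneth} and~\ref{thm max-Kunneth}. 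The five lemma applied to the ladder now shows that $\pi$ induces an isomorphism in $K$-theory on $C_{0}(\R) \x B$, and hence, by Bott periodicity, on $B$. This contradicts the hypothesis, so $A$ cannot satisfy both Theorems~\ref{thm min-Kunneth} and~\ref{thm max-Kunneth}.

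\emph{Parts (2) and (3).} Following Skandalis \cite{MR953916}, call a separable \cast-algebra $A$ \emph{$K$-nuclear} if $\pi_{A,B}$ induces a $\KK$-equivalence for every separable $B$. Two facts do the work: (i) every nuclear \cast-algebra $N$ is $K$-nuclear, trivially, since $\pi_{N,B}$ is then an \emph{isomorphism}; and (ii) $K$-nuclearity is invariant under $\KK$-equivalence. Granting these, if $A$ were $\KK$-equivalent to a nuclear algebra it would be $K$-nuclear, so $\pi_{A,B}$ would be a $\KK$-equivalence --- in particular a $K$-isomorphism --- for every separable $B$, hence, by the limit argument above, for every $B$; this contradicts the hypothesis and proves (2). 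For (3) one runs the same argument with $E$-theory in place of $\KK$, the relevant invariance statement being: \emph{if $A$ is exact, then the property ``$\pi_{A,B}$ induces an $E$-equivalence for every separable $B$'' is invariant under $E$-equivalence of $A$}. The exactness hypothesis on $A$ is precisely what is needed for the $E$-theoretic external product against $B$ to be available.

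\emph{The key step, and the main obstacle.} The whole content of (2)--(3) is the invariance statement (ii) and its $E$-theoretic analogue. For $\KK$, the route I would take is this: although $-\omax B$ is not functorial on $\KK$ in general, a $\KK$-equivalence $A \sim_{\KK} N$ with $N$ \emph{nuclear} can be represented by honest $*$-homomorphisms --- for instance a Cuntz quasihomomorphism $A \rightrightarrows \mathcal{M}(N \x \Compact)$, the nuclearity of $N$ entering via the fact that the algebra $\Compact(E)$ of compact operators on a Hilbert $N$-module $E$ is nuclear --- and $*$-homomorphisms do tensor with $\id_{B}$ over $\omax$. This produces a class in $\KK(A \omax B, N \omax B)$, and, using the naturality of $\pi$ for $*$-homomorphisms together with the invertibility of $\pi_{N,B}$, one identifies the composite $A \omax B \to N \omax B = N \omin B \to A \omin B$ with the class of $\pi_{A,B}$; since the outer two maps are $\KK$-equivalences, so is $\pi_{A,B}$. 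I expect the main obstacle to be exactly this point --- controlling the (generally ill-behaved) maximal tensor product along a $\KK$- or $E$-equivalence, which is where the nuclearity of the target and, in the $E$-theory case, the exactness of $A$ are used --- and for its details I would refer to Skandalis \cite{MR953916}.
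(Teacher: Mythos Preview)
Your proposal is correct. The paper's own proof is a one-liner: it notes that one may reduce to separable $B$ and then simply cites \cite{MR953916, MR2058474}. You carry out that reduction and then supply substantially more detail than the paper does.

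For Part~(1), your five-lemma argument via a geometric resolution is valid and is in fact the natural way to make the statement precise: one cannot simply compare the two K\"unneth short exact sequences directly, because their naturality is stated in $B$, not in the tensor-product construction, so one does not know \emph{a priori} that $\pi_{*}$ intertwines the Tor maps. By building both sequences from the \emph{same} resolution you sidestep this, and the identity $\pi_{*}\circ\alpha_{\max}=\alpha_{\min}$ for $P$ and $C$ (free $K$-theory) is exactly what makes the ladder commute on the outer terms. This is a genuine addition over the paper's deferral to the literature.

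For Parts~(2)--(3), your route via Skandalis' $K$-nuclearity is precisely the one the cited references take, and your identification of the ``main obstacle'' --- that $-\omax B$ is not a $\KK$-functor in general, so one must exploit the nuclearity of the target (respectively, the exactness of $A$ in the $E$-theory case) to push the equivalence through the maximal tensor product --- is accurate. Since you ultimately refer back to \cite{MR953916} for that step, your argument here coincides with the paper's.
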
 
\begin{proof} 
Enough to note that we may assume that $B$ is separable. See \cite{MR953916, MR2058474}. 
\end{proof}

\begin{ex}[{$\Nmin \backslash \Nmax \ne \emptyset$; c.f.\ \cite[Introduction]{MR2100669}}]\label{ex min-max} Let $\Gamma$ be an infinite countable discrete group with Kazhdan property (T) and Akemann-Ostrand property (AO) (cf.\ \cite{MR2562137}). Then the natural map
	\begin{equation}
	C^{*}_{\lambda}(\Gamma) \omax C^{*}_{\lambda}(\Gamma) \to C^{*}_{\lambda}(\Gamma) \omin C^{*}_{\lambda}(\Gamma) 
	\end{equation}
does not induce isomorphism in $K$-theory (Skandalis \cite{MR953916}). 

We specialise to the case $\Gamma$ a lattice in $\mathrm{Sp}(n, 1)$. Julg proved that $\Gamma$ satisfies the Baum-Connes conjecture with coefficients \cite{MR1903759}. Then by \cite[Corollary 0.2]{MR2100669}, we see that $C^{*}_{\lambda}(\Gamma)$ is in $\Nmin$. Consequently, $C^{*}_{\lambda}(\Gamma)$ is {\em not} in $\Nmax$ by Lemma~\ref{lem pi} and $\Nmin \backslash \Nmax \ne \emptyset$. 
\end{ex}

\begin{ex}\label{ex min+max} Let $A \in \Nmin \backslash \Nmax$ and let $B \in \Nmax \backslash \Nmin$. Then it follows from the 2-out-of-3 property that $A \oplus B$ is neither in $\Nmax$ nor in $\Nmin$.
\end{ex}

Examples~\ref{ex max-min}, \ref{ex min-max} and \ref{ex min+max} answer some of the questions raised by Blackadar in \cite[23.13.2]{MR1656031}.

\appendix
\section{Skandalis' Examples}\label{app skandalis}

We briefly sketch Skandalis' arguments for the convenience of the reader. See \cite{MR953916,MR1143449,MR2058474} for details.

We refer to \cite{MR2562137} for group theoretic terminologies in the following.  
\begin{thm}[Skandalis] Let $\Gamma$ be an infinite countable discrete group with property (T) and property (AO). Then the natural map
	\begin{equation}
	C^{*}_{\lambda}\Gamma \omax C^{*}\Gamma \to C^{*}_{\lambda}\Gamma \omin C^{*}\Gamma 
	\end{equation}
does not induce isomorphism in $K$-theory. If in addition, $\Gamma$ has property (F), then $C^{*}\Gamma$ is not $K$-exact.
\end{thm}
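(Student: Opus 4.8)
The plan is to extract the obstruction from the two commuting regular representations of $\Gamma$ on $\ell^{2}\Gamma$ together with the Kazhdan projection coming from property (T); property (F) then transports the conclusion from the reduced to the full group algebra. Recall first the data. Property (T) produces the \emph{Kazhdan projection} $p\in C^{*}\Gamma$: a central projection acting, in every unitary representation of $\Gamma$, as the orthogonal projection onto the invariant vectors. Thus $p\neq 0$ (the trivial representation $\tau$ sends $p$ to $1$), while $\lambda(p)=0$, since the regular representation of the infinite group $\Gamma$ has no nonzero invariant vectors; here $\lambda\colon C^{*}\Gamma\twoheadrightarrow C^{*}_{\lambda}\Gamma$ is the canonical quotient. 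The right regular representation $\rho$ commutes with $\lambda$ and, being unitarily equivalent to it, factors through $C^{*}_{\lambda}\Gamma$; the resulting commuting pair gives a $*$-homomorphism $M\colon C^{*}_{\lambda}\Gamma\omax C^{*}\Gamma\to\mathcal B(\ell^{2}\Gamma)$, $a\otimes b\mapsto\lambda(a)\rho(\lambda(b))$, and property (AO) is precisely the statement that $M$ descends modulo the compacts $\Compact$ to $\overline M\colon C^{*}_{\lambda}\Gamma\omin C^{*}\Gamma\to\mathcal Q(\ell^{2}\Gamma)=\mathcal B(\ell^{2}\Gamma)/\Compact$, so that $\varkappa\circ M=\overline M\circ\pi$, where $\varkappa$ is the Calkin quotient and $\pi\colon C^{*}_{\lambda}\Gamma\omax C^{*}\Gamma\twoheadrightarrow C^{*}_{\lambda}\Gamma\omin C^{*}\Gamma$ the map under study.

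Next I would feed in the Kazhdan projection through the $*$-homomorphism $\Psi\colon C^{*}\Gamma\to C^{*}_{\lambda}\Gamma\omax C^{*}\Gamma$ determined by $u_{g}\mapsto\lambda_{g}\otimes u_{g}$ (legitimate, since $g\mapsto\lambda_{g}\otimes u_{g}$ is a unitary representation), together with $\Psi_{\min}:=\pi\circ\Psi$. Slicing the first leg of $\Psi$ by the canonical trace $\tau_{\lambda}$ on $C^{*}_{\lambda}\Gamma$ recovers $\tau$, so $\tau(p)=1$ shows that $\Psi(p)$ and $\Psi_{\min}(p)$ are nonzero projections. The composite $M\circ\Psi$ is the conjugation representation $u_{g}\mapsto\lambda_{g}\rho_{g}$ of $C^{*}\Gamma$, so $M(\Psi(p))$ is the projection onto the $\ell^{2}$-functions constant on conjugacy classes; as $\Gamma$ is ICC this is a rank-one projection. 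The image $\mathcal E\subseteq\mathcal B(\ell^{2}\Gamma)$ of $M$ acts irreducibly (the commutant of $\lambda(\Gamma)\cup\rho(\Gamma)$ is $\C$) and contains a rank-one projection, hence contains $\Compact$, and $\overline M$ corestricts to a surjection $\overline M^{\mathcal E}\colon C^{*}_{\lambda}\Gamma\omin C^{*}\Gamma\twoheadrightarrow\mathcal E/\Compact$. Thus $M$ and $\overline M^{\mathcal E}$ assemble, together with $\pi$ and $\varkappa$, into a morphism of extensions from $0\to\ker M\to C^{*}_{\lambda}\Gamma\omax C^{*}\Gamma\to\mathcal E\to 0$ to $0\to\ker\overline M^{\mathcal E}\to C^{*}_{\lambda}\Gamma\omin C^{*}\Gamma\to\mathcal E/\Compact\to 0$.

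The conclusion for $\pi$ would then be drawn as follows. Since $K_{*}(\mathcal B(\ell^{2}\Gamma))=0$ we have $M_{*}=0$, hence $\overline M_{*}\circ\pi_{*}=\varkappa_{*}\circ M_{*}=0$; so if $\pi_{*}$ were an isomorphism, in particular surjective, then $\overline M_{*}=0$ on $K$-theory. The main point --- and, I expect, the principal difficulty --- is the opposite, nonvanishing statement: $\overline M_{*}\neq 0$, i.e. $\overline M_{*}\colon K_{1}(C^{*}_{\lambda}\Gamma\omin C^{*}\Gamma)\to K_{1}(\mathcal Q(\ell^{2}\Gamma))=\Z$ is nonzero. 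Conceptually this is the assertion that the $\Compact$-extension $0\to\Compact\to\mathcal E\to\mathcal E/\Compact\to 0$, which has $K_{*}(C_{\varkappa})\cong K_{*}(\Compact)=(\Z,0)$ by the Puppe sequence (Theorem~\ref{thm puppe}), remains $K$-nontrivial after pulling back along $\overline M^{\mathcal E}$; the survival of the class $1\in K_{0}(\Compact)$ is exactly where properties (T) and (AO) are used together, since a splitting of the pulled-back extension would lift $\overline M$ along $\varkappa$ and then, evaluated on the Kazhdan projection $\Psi(p)$ (for which $M(\Psi(p))$ is a rank-one projection), would contradict $K_{*}(\mathcal B(\ell^{2}\Gamma))=0$. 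Granting $\overline M_{*}\neq 0$, $\pi_{*}$ is not surjective, hence not an isomorphism.

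For the second assertion, by Remark~\ref{rem sep-K-exactness} it suffices to produce a single extension of separable \cast-algebras whose double cone $C$ satisfies $K_{*}(C^{*}\Gamma\omin C)\neq 0$; one takes a separable subextension of $0\to\Compact\to\mathcal E\to\mathcal E/\Compact\to 0$ carrying the Kazhdan datum. As $B\mapsto K_{*}(C^{*}\Gamma\omax B)$ is half-exact, the associated double cone is $K$-trivial for $\omax$, so it is enough to see that it is not for $\omin$. This is where property (F) enters: it is what makes it possible to replace $C^{*}_{\lambda}\Gamma$ by $C^{*}\Gamma$ in the preceding construction --- the relevant representation of $C^{*}\Gamma$ on $\ell^{2}\Gamma$ is then minimally continuous against $C^{*}\Gamma$ --- so that the failure of half-exactness of $B\mapsto K_{*}(C^{*}\Gamma\omin B)$ on this extension is detected by the very same Kazhdan class, now living in $K_{0}(C^{*}\Gamma\otimes\Compact)=K_{0}(C^{*}\Gamma)$. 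The crux, again, is the nonvanishing of this class.
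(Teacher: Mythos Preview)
Your approach has a genuine error at a crucial point: the claim that $\Psi_{\min}(p)\neq 0$ is false. The slicing argument is wrong --- slicing the first leg by $\tau_{\lambda}$ does \emph{not} recover the trivial representation; on $u_g$ it returns $\tau_{\lambda}(\lambda_g)\,u_g=\delta_{g,e}\,u_g$, i.e.\ the map $a\mapsto \tau_{\mathrm{can}}(a)\cdot 1$ where $\tau_{\mathrm{can}}$ is the canonical trace on $C^{*}\Gamma$, and $\tau_{\mathrm{can}}(p)=\tau_{\lambda}(\lambda(p))=0$. In fact $\Psi_{\min}(p)=0$ outright: $C^{*}_{\lambda}\Gamma\omin C^{*}\Gamma$ embeds in $\mathcal B(\ell^{2}\Gamma\otimes H_{u})$ and under this embedding $\Psi_{\min}$ becomes $g\mapsto\lambda_{g}\otimes\pi_{u}(g)$, which by Fell absorption is a multiple of the regular representation and hence annihilates the Kazhdan projection.

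This is not a cosmetic slip; it inverts the logic of the proof. The paper's argument \emph{uses} $\Psi_{\min}(p)=0$: the image of the Kazhdan projection in $C^{*}_{\lambda}\Gamma\omax C^{*}\Gamma$ therefore lies in $J=\ker\pi$, and one shows it gives a nonzero class in $K_{0}(J)$ by observing that the morphism of extensions (your $M$ over $\overline M$, via (AO)) restricts to a map $J\to\mathcal K(\ell^{2}\Gamma)$ sending that projection to a nonzero finite-rank projection. That is the witness that $\pi$ is not a $K$-isomorphism. Your proposed alternative --- proving directly that $\overline M_{*}\colon K_{1}\to K_{1}(\mathcal Q)$ is nonzero --- is never actually established: the ``splitting would contradict $K_{*}(\mathcal B)=0$'' sketch does not produce a contradiction (if $\sigma$ splits the pulled-back extension then $\sigma\circ\pi-M$ lands in $\mathcal K$, but evaluated at $\Psi(p)$ this just says $M(\Psi(p))\in\mathcal K$, which you already knew). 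For the (F)-part, the paper similarly exhibits a nonzero class in $K_{0}(L)$, $L=\ker\bigl(C^{*}\Gamma\omin C^{*}\Gamma\to C^{*}_{\lambda}\Gamma\omin C^{*}\Gamma\bigr)$, and shows it is not hit by $K_{0}(I\omin C^{*}\Gamma)$ because the composite $I\omin C^{*}\Gamma\to L\to\mathcal K$ vanishes; your treatment of this step is only gestured at.
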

See Example \ref{ex K-exactness}(\ref{item CG}) and Example~\ref{ex min-max}.
\begin{proof}[Sketch of Proof] We assume that $\Gamma$ has (T), (AO) and (F). Then one can construct a commutative diagram of the form
	\begin{equation}
	\xymatrix{
	&&\,\,\,\,\,\,\,I \underset{\min}{\x} C^{*}\Gamma \ar@{_{(}->}[d] \ar@{.>}[dr]^{0}&\\
	\C \ar@{-->}[dr]_-{\text{(T)}} \ar@/_/@{.>}[dd]^(0.79){\ne 0} \ar@/^14pt/@{.>}[rr]^(0.9){\ne 0} \ar@/^15pt/@{.>}[ddrr]^(0.87){0}& \,\,\,\,\,\,\,I \underset{\max}{\x} C^{*}\Gamma \ar@{>->}[d] \ar@{->>}[ur] & L \ar@{>->}[d] \ar@{.>}[r] & \mathcal{K}(l^{2}\Gamma) \ar@{>->}[d]\\
	 & C^{*}\Gamma \underset{\max}{\x} C^{*}\Gamma \ar@{->>}[r] \ar@{->>}[d]& C^{*}\Gamma \underset{\min}{\x} C^{*}\Gamma \ar@{->>}[d] \ar@{-->}[r]^-{\text{(F)}} & \mathcal{B}(l^{2}\Gamma) \ar@{->>}[d]\\
	 J \ar@{>->}[r] \ar@{.>}[d] & C^{*}_{\lambda}\Gamma \underset{\max}{\x} C^{*}\Gamma \ar@{->>}[r] \ar[d]_{\lambda \times \rho} & C^{*}_{\lambda}\Gamma \underset{\min}{\x} C^{*}\Gamma \ar@{-->}[d]_{\text{(AO)}} \ar@{-->}[r]^-{\text{(AO)}}& \mathcal{Q}(l^{2}\Gamma) \ar@{=}[dl]\\
	\mathcal{K}(l^{2}\Gamma) \ar@{>->}[r] & \mathcal{B}(l^{2}\Gamma) \ar@{->>}[r] & \mathcal{Q}(l^{2}\Gamma)
	},
	\end{equation}
where $\xymatrix@1{\bullet\, \ar@{->>}[r] & \,\bullet}$ denotes a quotient map and $\xymatrix@1{\bullet \,\ar@{>->}[r] & \,\bullet\, \ar@{->>}[r] & \,\bullet}$ denotes an extension.
Here $I$ is the kernel of $C^{*}\Gamma \to C^{*}_{\lambda}\Gamma$. 

Let $q \in C^{*}\Gamma \omax C^{*}\Gamma$ denote the image of the Kazhdan projection under the diagonal map 
	\begin{equation}
	\Delta\colon C^{*}\Gamma \to C^{*}\Gamma \omax C^{*}\Gamma, \quad \gamma \mapsto \gamma \otimes \gamma.
	\end{equation}

Then the image of $q$ in $C^{*}_{\lambda} \omin C^{*}\Gamma$ is zero, while the image in $B(l^{2}\Gamma)$ is non-zero. Hence $q$ defines non-zero classes in $K_{0}(J)$ and $K_{0}(L)$. Moreover, the composition 
	\begin{equation}
	\xymatrix@1{I \omin C^{*}\Gamma \ar[r] & L \ar[r] & \mathcal{K}(l^{2}\Gamma)}
	\end{equation}
is zero, since the composition $\xymatrix@1{I \omax C^{*}\Gamma \ar[r] & \mathcal{B}(l^{2}\Gamma)}$ is zero.
	
It follows that the class in $K_{0}(L)$ cannot come from $K_{0}(I \omin C^{*}\Gamma)$. 
\end{proof}

\bibliographystyle{amsalpha}
\bibliography{../BibTeX/biblio}
\end{document}